\theoremstyle{definition}
\newtheorem{Def}{Definition}[section]
\newtheorem{Thm}[Def]{Theorem}
\newtheorem{Rem}[Def]{Remark}
\newtheorem{Ex}[Def]{Example}
\newtheorem{Lem}[Def]{Lemma}
\numberwithin{equation}{section}
\newcommand{\spl}{\mathrm{Sp}}
\newcommand{\Z}{\mathbb{Z}}
\newcommand{\sll}{\mathrm{SL}}
\title{On the mod $p$ kernel of the theta operator and Eisenstein series}
\author{Shoyu Nagaoka and Sho Takemori}
\date{}
\begin{document}

\maketitle

%
\begin{abstract}
\noindent
Siegel modular forms in the space of the mod $p$ kernel of the theta operator
are constructed by the Eisenstein series in some odd-degree cases.
Additionally, a similar result in the case of Hermitian modular forms is given.
\end{abstract}

\section{Introduction}
\label{intro}

The theta operator is a kind of differential operator operating on modular forms.
Let $F$ be a Siegel modular form with the generalized $q$-expansion
$F=\sum a(T)q^T$, $q^T:=\text{exp}(2\pi i\text{tr}(TZ)))$.
The theta operator $\varTheta$ is defined as
$$
\varTheta : F=\sum a(T)q^T
\longmapsto
\varTheta (F):=\sum a(T)\cdot\text{det}(T)q^T,
$$
which is a generalization of the classical Ramanujan's $\theta$-operator.
It is known that the notion of singular modular form $F$ is characterized by
$\varTheta (F)=0$.

For a prime number $p$, the mod $p$ kernel of the theta operator is defined
as the set of modular form $F$ such that $\varTheta (F) \equiv 0 \pmod{p}$.
Namely, the element in the kernel of the theta operator can be interpreted as
a mod $p$ analogue of the singular modular form.

In the case of Siegel modular forms of even degree, several examples are known
(cf.  Remark \ref{exampleS}).
In \cite{Na}, the first author constructed such a form by using Siegel
Eisenstein series in the case of even degree. However little is known about
the existence of such a modular form in the case of odd degree.

In this paper, we shall show that some
odd-degree Siegel Eisenstein series give examples of modular forms in the mod $p$
kernel of the theta operator (see Theorem \ref{main1}). Our proof is based on Katsurada's functional equation of
Kitaoka's polynomial appearing as the main factor of the Siegel series.

For a Siegel modular form $F \in M_{k}(\spl_{n}(\Z))_{\Z_{(p)}}$ (here the
subscript $\Z_{(p)}$ means every Fourier coefficient of $F$ belongs to $\Z_{(p)}$),
we denote by $\omega(F)$ the filtration of $F \bmod{p}$, that is, minimum
weight $l$ such that there exists $G \in M_{l}(\spl_{n}(\Z))_{\Z_{(p)}}$ and
$F \equiv G \mod{p}$ (congruence between $q$-expansions).  Our ultimate aim is
that, for a given weight $k$, list all $F \in M_{k}(\spl_{n}(\Z))_{\Z_{(p)}}$
and $\Theta(F) \equiv 0 \mod{p}$ such that $\omega(F) = k$.  In elliptic
modular form case, this problem was already solved (cf.
\cite{Se}, \cite{Kat}) and there
is a simple description. Assume $p \ge 5$ and let
$f \in M_{k}(\sll_{2}(\Z))_{\Z_{(p)}}$ with $\Theta(f) \equiv 0 \mod{p}$ and
$\omega(f) = k$, then $k$ is divisible by $p$ and there exists
$g \in M_{k/p}(\sll_{2}(\Z))_{\Z_{(p)}}$ such that
$f \equiv g^{p} \mod{p}$ and $\omega(g) = k/p$.

There are a several methods to construct $F \in M_{k}(\spl_{n}(\Z))_{\Z_{(p)}}$ with
$\Theta(F) \equiv 0 \mod{p}$ other than by using Eisenstein series.
\begin{enumerate}
  \item By theta series (with harmonic polynomials) associated to quadratic forms
  with discriminant divisible by $p$.
  \item By the operator $A(p)$.
\end{enumerate}
B\"{o}cherer, Kodama and the first author argue the first method in \cite{Bo-Ko-Na}.
In several cases, it gives $F \in M_{k}(\spl_{n}(\Z))_{\Z_{(p)}}$ with $\Theta(F) \equiv 0 \mod{p}$
and $\omega(F) = k$.
As for the second method, the operator $A(p)$ is defined by
\begin{equation*}
  F|A(p) \equiv F - \Theta^{(p-1)}F \mod{p}.
\end{equation*}
This operator was introduced in \cite{C-C-R} and \cite{De-Ri}.  If
$\Theta(F) \equiv 0 \mod{p}$, then we have $F|A(p) \equiv F \mod{p}$.
Therefore for any
$F \in M_{k}(\spl_{n}(\Z))_{\Z_{(p)}}$ with $\Theta(F) \equiv 0 \mod{p}$,
there exists $l \in \Z_{\ge 0}$ and $G \in M_{l}(\spl_{n}(\Z))_{\Z_{(p)}}$ such that
$F \equiv G|A(p) \mod{p}$.
However it seems difficult to compute $\omega(F|A(p))$ in terms of
$\omega(F)$, and the filtration $\omega(F|A(p))$ can be large compared to
$\omega(F)$ (cf. \cite[\S 4, \S 6]{Bo-Ki-Ta}).

Additionally, we give a similar result in the case of
Hermitian modular forms (Theorem \ref{main2}). In this case, we use Ikeda's functional equation which is the
corresponding result of Katsurada's one.

\section{Siegel modular case}
\subsection{Siegel modular forms}
Let $\Gamma^{(n)}=\text{Sp}_n(\mathbb{Z})$ be the Siegel modular group of degree $n$
and $M_k(\Gamma^{(n)})$ be the space of Siegel modular forms of weight $k$ for
$\Gamma^{(n)}$. Any element $F$ in $M_k(\Gamma^{(n)})$ has a Fourier expansion
of the form
$$
F(Z)=\sum_{0\leq T\in\Lambda_n}a(T;F)q^T,\quad
q^T:=\text{exp}(2\pi i\text{tr}(TZ)),\quad Z\in\mathbb{H}_n,
$$
where
\begin{align*}
& \mathbb{H}_n=\{\,Z\in\text{Sym}_n(\mathbb{C})\,\mid\, \text{Im}(Z)>0\,\}\;\;
(\text{the Siegel upper half space}),\\
& \Lambda_n:=\{\,T=(t_{jl})\in\text{Sym}_n(\mathbb{Q})\,\mid\,
t_{jj}\in\mathbb{Z},\,2t_{jl}\in\mathbb{Z}\,\}.
\end{align*}
We also denote by $S_k(\Gamma^{(n)})$
the space of $M_k(\Gamma^{(n)})$ consisting of cusp forms.

For a subring $R\subset \mathbb{C}$, $M_k(\Gamma^{(n)})_R$
(resp. $S_k(\Gamma^{(n)})_R$) consists of an element  $F$ in $M_k(\Gamma^{(n)})$
(resp. $S_k(\Gamma^{(n)})$) whose Fourier coefficients $a(T;F)$ lie in $R$.
\subsection{Theta operator}
For an element $F$ in  $M_k(\Gamma^{(n)})$, we define
$$
\varTheta : F=\sum a(F;T)q^T\,\longmapsto\,
\varTheta (F):=\sum a(F;T)\cdot \text{det}(T)q^T
$$
and call it the {\it theta operator}. It should be noted that $\varTheta (F)$ is
not necessarily a Siegel modular form.
However, we have the following result.
\begin{Thm} {\rm (B\"{o}cherer-Nagaoka \cite{B-N})}
Let $p$ be a prime number with $p \ge n+3$ and $\mathbb{Z}_{(p)}$ be the ring of
$p$-integral rational numbers.  If $F\in M_k(\Gamma^{(n)})_{\mathbb{Z}_{(p)}}$,
then there exists a cusp form $G\in S_{k+p+1}(\Gamma^{(n)})_{\mathbb{Z}_{(p)}}$
such that
$$
\varTheta (F) \equiv G \pmod{p},
$$
where the congruence means the Fourier coefficient-wise one.
\end{Thm}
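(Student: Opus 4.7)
The plan is to produce the cusp form $G$ by combining $F$ with a suitable mod $p$ surrogate for the quasi-modular correction inherent in $\varTheta$, and to verify the congruence Fourier coefficient by Fourier coefficient. Cuspidality will come almost for free from the definition of $\varTheta$; the work is in exhibiting a holomorphic modular form of weight $k+p+1$ whose $q$-expansion matches $\varTheta(F)$ modulo $p$.

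The cuspidality part is the easy one. If $T \in \Lambda_n$ is positive semidefinite but singular then $\det(T)=0$, so every Fourier coefficient of $\varTheta(F)$ indexed by a boundary $T$ vanishes identically, not merely modulo $p$. Once a holomorphic $G \in M_{k+p+1}(\Gamma^{(n)})_{\Z_{(p)}}$ with $G \equiv \varTheta(F) \pmod{p}$ has been produced, its boundary Fourier coefficients therefore lie in $p\Z_{(p)}$, i.e. the mod $p$ reduction of $G$ is killed by the Siegel $\Phi$-operator. Running an induction on the degree together with the $q$-expansion principle then allows one to modify $G$ by an element of $pM_{k+p+1}$ so as to land inside $S_{k+p+1}(\Gamma^{(n)})_{\Z_{(p)}}$ without disturbing the congruence class modulo $p$.

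For the construction of $G$, the natural route is through a generalized Maass-type identity expressing $\varTheta(F)$ as a constant multiple of $\det(\partial/\partial z_{ij})F$ plus a quasi-modular correction built from the product of $F$ with an Eisenstein-like quasi-modular object of weight $2$. This correction is precisely the obstruction to $\varTheta(F)$ being modular. The crucial mod $p$ input would be a congruence identifying the quasi-modular object with the holomorphic Siegel Eisenstein series $E^{(n)}_{p+1}$ of weight $p+1$. Substituting $E^{(n)}_{p+1}$ for the quasi-modular correction, and multiplying any remaining weight $k+2$ modular piece by $E^{(n)}_{p-1} \equiv 1 \pmod p$ to promote it to weight $k+p+1$, then produces a genuine holomorphic modular form $G$ of weight $k+p+1$ congruent to $\varTheta(F)$ modulo $p$.

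The main obstacle is to make the Maass-type identity precise in arbitrary degree $n$ and to control the $p$-adic behaviour of the Fourier coefficients of $E^{(n)}_{p+1}$. The hypothesis $p \ge n+3$ is expected to enter at exactly this point, both through Kummer-type congruences for the special $L$-values appearing in the constant term of $E^{(n)}_{p+1}$ (ensuring $p$-integrality) and through the invertibility modulo $p$ of combinatorial constants occurring in the determinantal differential identity. Once these ingredients are in place, the proof concludes by matching Fourier coefficients and invoking the cuspidality reduction above.
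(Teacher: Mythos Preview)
The paper does not supply its own proof of this statement: it is quoted verbatim as a result of B\"ocherer--Nagaoka with a citation to \cite{B-N}, and the text moves on immediately afterwards. So there is nothing in the present paper to compare your proposal against.

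As for the proposal itself, it is a plan rather than a proof, and you say so explicitly (``The main obstacle is to make the Maass-type identity precise\ldots''). The outline is morally along the lines of the original argument in \cite{B-N}: one writes $\varTheta(F)$ via a determinantal differential operator, corrects the quasi-modular defect using a Rankin--Cohen-type expression, and replaces the non-holomorphic correction by a weight-$(p+1)$ Eisenstein-type object modulo $p$, with $E_{p-1}^{(n)}\equiv 1\pmod p$ used to align weights. Two points deserve caution. First, in higher degree the ``quasi-modular correction'' is not a single product $F\cdot(\text{weight }2)$ but a sum of Rankin--Cohen brackets involving derivatives of $F$ of various orders; your description compresses this and the bookkeeping is where the hypothesis $p\ge n+3$ actually bites. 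Second, your cuspidality reduction is a bit optimistic: having $\Phi(G)\equiv 0\pmod p$ does not by itself let you subtract an element of $pM_{k+p+1}$ to land in $S_{k+p+1}$ unless you already know something about lifting mod $p$ boundary forms, which is an additional (true, but nontrivial) input. None of this is fatal to the strategy, but as written the proposal is a roadmap with the hard steps left as obstacles, not a proof.
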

In some cases, it happens that $G \equiv 0 \pmod{p}$, namely,
$$
\varTheta (F) \equiv 0 \pmod{p}.
$$
In such a case, we say that the modular form $F$ is an element of the
{\it mod p kernel of the theta operator} $\varTheta$.

A Siegel modular form $F$ with $p$-integral Fourier coefficients is
called {\it mod $p$ singular} if it satisfies
$$
a(T;F) \equiv 0 \pmod{p}
$$
for all $T\in\Lambda_n$ with $T>0$.
Of course, a mod $p$ singular modular form $F$ satisfies
$\varTheta (F) \equiv 0 \pmod{p}$.

If an element $F$ of the mod $p$ kernel of the theta operator is not
mod $p$ singular, we call it here {\it essential}.

The main purpose of this paper is to construct essential forms by using
Eisenstein series.
\subsection{Siegel Eisenstein series}
Let
$$
\Gamma_\infty^{(n)}:=\left\{ \begin{pmatrix} A& B \\ C & D \end{pmatrix}
\in \Gamma^{(n)}\,\Big{|}\,C=0_n\right\}.
$$
For an even integer $k>n+1$, the {\it Siegel Eisenstein series of weight $k$}
is defined by
$$
E_k^{(n)}(Z):=\sum_{\binom{*\;*}{C\,D}\in \Gamma_\infty^{(n)}\backslash \Gamma^{(n)}}
\text{det}(CZ+D)^{-k}.
$$
We set $\Lambda_n^+=\{  \,T\in\Lambda_n \mid T>0\,\}$. For $T\in\Lambda_n^+$,
we define $D(T)=2^{2[n/2]}\text{det}(T)$ and, if $n$ is even, then $\chi_T$ denotes the
primitive Dirichlet character corresponding to the extension
$K_{T} = \mathbb{Q}(\sqrt{(-1)^{n/2}\text{det}(2T)}\,)/\mathbb{Q}$.
We define a positive integer $C(T)$ by
\begin{equation*}
  C(T) =
  \begin{cases}
    D(T)/\mathfrak{d}_{T} & n: \text{even},\\
    D(T) & n: \text{odd}.
  \end{cases}
\end{equation*}
Here $\mathfrak{d}_{T}$ is the absolute value of the discriminant of $K_{T}/\mathbb{Q}$.

It is known that
the Fourier coefficient $a(T;E_k^{(n)})$\,$(T\in\Lambda_n^+)$ can be expressed as follows
(cf. \cite{Sh}, \cite{Sh-book}, \cite{Ka}, \cite{Ikeda-Siegel}, and \cite{Take}).
\begin{equation}
\begin{split}
\label{explicit}
a(T;E_k^{(n)})=& \zeta(1-k)^{-1}\prod_{i=1}^{[\frac{n}{2}]}\zeta(1+2i-2k)^{-1}\cdot
                  \prod_{\substack{q\mid C(T)\\q:\text{prime}}}F_q(T,q^{k-n-1})\\
                  & \times \begin{cases}
                     2^{n/2}\,L(1+\tfrac{n}{2}-k;\chi_T)  &\text{($n$: even)}\\
                     2^{(n+1)/2}                             &\text{($n$: odd)},
                     \end{cases}
\end{split}
\end{equation}
where $\zeta (s)$ is the Riemann zeta function and $L(s;\chi)$ is the Dirichlet
$L$-function with character $\chi$, and $F_q(T,X)\in\mathbb{Z}[X]$
is a polynomial with constant term 1.
The polynomial $F_q(T,X)$ is defined by the polynomial $g_{T}(X)$ in \cite[Theorem 13.6]{Sh-book}
for $K = F = \mathbb{Q}_{q}$, $\varepsilon'=1$ and $r=n$.

First we assume that $\boldsymbol{n}$ \textbf{is even}.
\begin{Thm}
\label{even}
{\rm (Nagaoka \cite{Na})} Let $n$ be an even integer and $p$ be a prime number with
$p>n+3$ and $p \equiv (-1)^{n/2} \pmod{4}$. Then, for any odd integer $t\geq 1$, there
exists a modular form $F\in M_{\tfrac{n}{2}+\tfrac{p-1}{2}\cdot t}(\Gamma^{(n)})_{\mathbb{Z}_{(p)}}$
satisfying
$$
\varTheta (F) \equiv 0 \pmod{p}.
$$
Moreover $F$ is essential.
\end{Thm}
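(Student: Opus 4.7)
The plan is to take $F := E_k^{(n)}$ with $k = \frac{n}{2} + \frac{p-1}{2}t$ and show directly from the explicit formula (\ref{explicit}) that both conclusions hold. Since $p > n+3$ and $t \geq 1$, the weight satisfies $k > n+1$, so $E_k^{(n)}$ is a well-defined holomorphic modular form. I would split $T \in \Lambda_n^+$ into two classes according to whether $p$ divides the discriminant $\mathfrak{d}_T$ of $K_T/\mathbb{Q}$.

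To obtain $\varTheta(F) \equiv 0 \pmod{p}$, I would first analyze the zeta prefactor $\zeta(1-k)^{-1}\prod_{i=1}^{n/2}\zeta(1+2i-2k)^{-1}$ via von Staudt--Clausen: under the hypothesis $p > n+3$, the only non-unit factor is $\zeta(1-(p-1)t)^{-1}$ (the $i = n/2$ term), with $v_p = 1 + v_p(t) \geq 1$. The remaining zeta factors and the local polynomials $F_q(T,q^{k-n-1})$ (which have constant term $1$) are $p$-adic units. For $T$ with $p \nmid \mathfrak{d}_T$ the $L$-value $L(1-(p-1)t/2;\chi_T)$ is $p$-integral -- in the trivial-character subcase, $t$ odd prevents $p-1 \mid (p-1)t/2$, so $\zeta(1-(p-1)t/2)$ carries no $p$-pole -- hence $a(T;F) \equiv 0 \pmod{p}$ and therefore $\det(T)a(T;F) \equiv 0 \pmod{p}$. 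For $T$ with $p \mid \mathfrak{d}_T$ one has $p \mid \det(2T)$ and thus $p \mid \det(T)$ (as $p$ is odd), so the factor $\det(T)$ alone kills the theta-coefficient modulo $p$.

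For essentialness I would exhibit $T \in \Lambda_n^+$ with $K_T = \mathbb{Q}(\sqrt{p^{\ast}})$, where $p^{\ast} = (-1)^{(p-1)/2}p$; the assumption $p \equiv (-1)^{n/2} \pmod{4}$ is precisely what makes such a positive $T$ available and gives $\chi_T(-1) = (-1)^m$ for $m := (p-1)t/2$. Expanding $B_{m,\chi_T} = p^{m-1}\sum_{a=1}^{p-1}\chi_T(a)B_m(a/p)$, only the $k = 0$ piece of each $B_m(a/p)$ contributes a $p$-pole, and the leading term modulo $\mathbb{Z}_p$ is $p^{-1}\sum_a \chi_T(a)a^m$; since $t$ odd forces $m + (p-1)/2 \equiv 0 \pmod{p-1}$, the congruence $\chi_T(a)a^m \equiv a^{m+(p-1)/2}\pmod{p}$ reduces this sum to $-1 \pmod{p}$ by orthogonality. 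Hence $v_p(B_{m,\chi_T}) = -1$, so $v_p(L(1-m;\chi_T)) = -1 - v_p(t)$, which exactly cancels the $1 + v_p(t)$ from the prefactor and gives $v_p(a(T;F)) = 0$, i.e.\ $a(T;F) \not\equiv 0 \pmod{p}$.

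The main technical hurdle is this final $B_{m,\chi_T}$ computation -- pinning down $v_p(B_{m,\chi_T}) = -1$ for the ramified quadratic character $\chi_T$ of conductor $p$ -- and this is also the unique place where $t$ odd is indispensable. Once that is established, the rest (weight bounds, $p$-integrality, the $\zeta$-factor bookkeeping, existence of the required $T$) follows routinely from the parity hypothesis on $p$.
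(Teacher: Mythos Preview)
This theorem is quoted from \cite{Na} and is not proved in the present paper; Remark~\ref{exampleS}(1) only records that $F$ is realised as a constant multiple of the Siegel Eisenstein series. Your overall strategy---take $E_k^{(n)}$ and exploit the fact that the $L$-factor $L(1+\tfrac n2-k;\chi_T)$ is $p$-integral unless $\chi_T$ has conductor exactly $p$, in which case its pole cancels the $p$-power coming from the zeta prefactor---is the intended one, and your computation of $v_p(B_{m,\chi_T})=-1$ for the Legendre symbol modulo $p$ (using $t$ odd to force $m+(p-1)/2\equiv 0\pmod{p-1}$) is correct. It is worth noting that this mechanism is quite different from the paper's own proof of the odd-degree analogue (Theorem~\ref{main1}): there the mod~$p$ vanishing is produced by a zero of some local polynomial $F_q(T,X)$ at $X=q^{-(n+1)/2}$ via Katsurada's functional equation, whereas in the even-degree case it comes from the global zeta prefactor and no functional equation is needed.

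There is, however, a genuine gap. Your claim that ``the only non-unit factor is $\zeta(1-(p-1)t)^{-1}$'' does not follow from $p>n+3$ alone: for each remaining factor $\zeta(1-N)^{-1}$ (with $N=k$ or $N=2k-2i$, $1\le i<n/2$) one has $v_p\bigl(\zeta(1-N)^{-1}\bigr)=v_p(N)-v_p(B_N)$, and nothing in the hypotheses rules out $p\mid B_N$ (irregular primes) or $p\mid N$. If the prefactor's total $p$-valuation is not exactly $1+v_p(t)$, then $E_k^{(n)}$ itself either fails to lie in $M_k(\Gamma^{(n)})_{\mathbb Z_{(p)}}$ or fails to be essential. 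The fix is to replace $E_k^{(n)}$ by a suitable $p$-power multiple---which is precisely why Remark~\ref{exampleS}(1) speaks of a \emph{constant multiple}---and redo the valuation bookkeeping; your dichotomy $p\mid\mathfrak d_T$ versus $p\nmid\mathfrak d_T$ then still drives both the kernel property and essentialness. You also omit the verification that the Fourier coefficients at \emph{singular} $T$ (those of rank $r<n$) are $p$-integral; compare the careful treatment of exactly this point in the paper's proof of Theorem~\ref{main1}.
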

\begin{Rem}
\label{exampleS}
(1)\, The modular form $F$  is realized by a constant multiple of Eisenstein series.
\\
(2)\,
In the case that $n=2$, $t=1$, and $p=23$, we obtain
$$
\varTheta (E_{12}^{(2)}) \equiv 0 \pmod{23}.
$$
(3)\;
There are several modular forms in $M_{12}(\Gamma^{(2)})$ satisfying a
congruence relation similar to that given in (2). For example,
$$
\varTheta (\vartheta_{\mathcal{L_{\text{Leech}}}}^{(2)}) \equiv 0 \pmod{23},
$$
where $\vartheta_{\mathcal{L_{\text{Leech}}}}^{(2)}$ is the degree 2 Siegel theta series attached
to the Leech lattice $\mathcal{L_{\text{Leech}}}$ (cf. \cite{N-T}). Moreover,
$$
\varTheta ([\Delta_{12}]) \equiv 0 \pmod{23},
$$
where $[\Delta_{12}]$ is the Klingen-Eisenstein series attached to the degree one
cusp form $\Delta_{12}\in S_{12}(\Gamma^{(1)})$ with $a(1;\Delta_{12})=1$ (cf. \cite{B}).
\\
(4) Let $\chi_{35}$ be the Igusa cusp form of degree 2 and weight $35$. It is known
that
$$
\varTheta (\chi_{35}) \equiv 0 \pmod{23},
$$
(cf. \cite{K-K-N}).
\end{Rem}
In the rest of this section, we treat the case that $\boldsymbol{n}$ \textbf{is odd}. We recall the
formula given in (\ref{explicit}). In this case, for $T\in\Lambda_n^+$, we have
\begin{equation*}
\begin{split}
a(T;E_k^{(n)})& =A_{n,k}\cdot \prod_{\substack{q\mid D(T)\\q:\text{prime}}}F_q(T,q^{k-n-1}),\\
               & A_{n,k}:=2^{(n+1)/2}\cdot \zeta(1-k)^{-1}\prod_{i=1}^{(n-1)/2}\zeta(1+2i-2k)^{-1}.
\end{split}
\end{equation*}

Our first result is as follows.
\begin{Thm}
\label{main1}
Let $n$ be a positive integer such that $n \equiv 3 \pmod{8}$.
Assume that $p$ is a prime number such that $p>n$.
For any positive integer $t$,
We define a constant multiple of Siege-Eisenstein series $F_{k}^{(n)}$ by
\begin{equation*}
F_{k}^{(n)}:=p^{-\alpha_p(n,k)}\cdot E_k^{(n)}.
\end{equation*}
Here
\begin{align*}
  k&:=\frac{n+1}{2}+(p-1)\cdot t,\\
  \alpha_p(n,k)
   &:=
     \text{ord}_p(A_{n,k})=\text{ord}_p\left(\zeta(1-k)^{-1}\prod_{i=1}^{(n-1)/2}\zeta(1+2i-2k)^{-1}\right).
\end{align*}
Then for any positive integer $t$,
the modular form
$F_{k}^{(n)}$ has $\mathbb{Z}_{(p)}$ integral Fourier coefficients
and satisfies
$$
\varTheta (F_{k}^{(n)}) \equiv 0 \pmod{p}.
$$
Moreover, $F_{k}^{(n)}$ is essential.
\end{Thm}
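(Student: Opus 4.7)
The plan is to verify the three assertions of the theorem---$p$-integrality of $F_{k}^{(n)}$, the congruence $\Theta(F_{k}^{(n)}) \equiv 0 \pmod{p}$, and essentiality---in turn.

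For \emph{$p$-integrality}, for $T \in \Lambda_{n}^{+}$ the explicit formula~(\ref{explicit}) gives
$$a(T; F_{k}^{(n)}) = \bigl(p^{-\alpha_{p}(n,k)} A_{n,k}\bigr) \cdot \prod_{q \mid D(T)} F_{q}(T, q^{k-n-1}).$$
The first factor is a $p$-unit by the very definition of $\alpha_{p}(n,k)$; the second is a rational integer, since each $F_{q}(T,X) \in \mathbb{Z}[X]$ is evaluated at the integer $q^{k-n-1}$ (note $k-n-1 = (p-1)t - (n+1)/2 \ge 0$ under $p > n$, $t \ge 1$). Fourier coefficients indexed by semidefinite $T$ of rank $r < n$ are controlled by the Siegel $\Phi$ operator, $\Phi(E_{k}^{(n)}) = E_{k}^{(n-1)}$, reducing integrality to lower degree.

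For the \emph{theta-vanishing}, since $\Theta$ multiplies the $T$-th Fourier coefficient by $\det(T)$, it is enough to show
$$\prod_{q \mid D(T)} F_{q}(T, q^{k-n-1}) \equiv 0 \pmod{p} \quad \text{for every } T \in \Lambda_{n}^{+} \text{ with } p \nmid \det(T).$$
The pivotal observation is the Fermat-type congruence
$$q^{k-n-1} \equiv q^{-k} \pmod{p} \qquad (q \ne p),$$
which follows from $2k - n - 1 = 2(p-1)t$. Thus $X = q^{k-n-1}$ sits at the mod-$p$ fixed point of the involution $X \mapsto q^{-n-1} X^{-1}$ governing Katsurada's functional equation for the Kitaoka polynomial $F_{q}(T,X)$. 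Specializing the FE at $X = q^{k-n-1}$, the monomial correction on the right-hand side becomes a power of $q$ whose exponent is divisible by $p-1$, hence $\equiv 1 \pmod{p}$, while the left-hand side $F_{q}(T, q^{-k})$ reduces modulo $p$ to $F_{q}(T, q^{k-n-1})$. Matching the two sides yields
$$(1 - \epsilon_{q}(T))\, F_{q}(T, q^{k-n-1}) \equiv 0 \pmod{p}$$
for an explicit local sign $\epsilon_{q}(T) \in \{\pm 1\}$. Since $p$ is odd, $F_{q}(T, q^{k-n-1}) \equiv 0 \pmod{p}$ whenever $\epsilon_{q}(T) = -1$; the hypothesis $n \equiv 3 \pmod 8$ enters through the product formula (Hilbert-reciprocity type) for the local Hasse invariants built into Katsurada's signs, forcing at least one $\epsilon_{q}(T)$ with $q \mid D(T)$ to equal $-1$ for every $T$ with $p \nmid \det(T)$, so the whole product vanishes modulo $p$.

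For \emph{essentiality}, I would exhibit a single $T_{0} \in \Lambda_{n}^{+}$ with $p \mid \det(T_{0})$ but $a(T_{0}; F_{k}^{(n)}) \not\equiv 0 \pmod{p}$. A natural candidate is $T_{0}$ with $\det(T_{0})$ exactly divisible by $p$ and other local invariants arranged to be trivial, for which Katsurada's closed form permits $F_{p}(T_{0}, p^{k-n-1})$ to be evaluated directly and checked to be a $p$-unit. The principal obstacle throughout is the theta-vanishing step---specifically, the sign bookkeeping: identifying Katsurada's local sign $\epsilon_{q}(T)$ as a Hasse-invariant-type quantity, and verifying that $n \equiv 3 \pmod 8$ is precisely the parity condition forcing the global sign product to contain a $-1$ at some prime dividing $D(T)$. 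This is the odd-degree counterpart of the condition $p \equiv (-1)^{n/2} \pmod 4$ in Theorem~\ref{even}, where the mod-$p$ vanishing instead arose from the Dirichlet $L$-value $L(1 + \tfrac{n}{2} - k;\, \chi_{T})$.
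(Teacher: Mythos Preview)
Your core argument for $\varTheta(F_k^{(n)}) \equiv 0 \pmod p$ matches the paper's: Katsurada's functional equation together with the product formula for the local signs, with $n \equiv 3 \pmod 8$ forcing $\eta_\infty(T) = -1$ (via $(-1,-1)_\infty^{(n^2-1)/8}=-1$) and hence $\eta_q(T) = -1$ for some finite $q \mid D(T)$. The paper packages this as the exact vanishing $F_q(T, q^{-(n+1)/2}) = 0$ in Lemma~\ref{keylemmaS} and then reduces mod $p$ via $q^{k-n-1}\equiv q^{-(n+1)/2}\pmod p$; your mod-$p$ version of the same computation is equivalent.

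There is, however, a genuine gap in your $p$-integrality step. For $T$ of \emph{even} rank $r<n$, the $\Phi$-reduction identifies $a(T;E_k^{(n)})$ with a degree-$r$ Eisenstein coefficient, but the explicit formula in even degree carries the extra factor $L\bigl(1+\tfrac{r}{2}-k;\chi_{T_1}\bigr)$, which your normalization $p^{-\alpha_p(n,k)}$ (built only from Riemann-zeta values) does not absorb. One must check separately that this Dirichlet $L$-value is $p$-integral; the paper invokes Carlitz's theorem and verifies that $k-\tfrac{r}{2}$ is never a multiple of $(p-1)/2$ for $2\le r\le n-1$ when $p>n$. Your appeal to ``lower degree'' does not supply this. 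Your essentiality sketch is likewise incomplete: the substantive step is not evaluating $F_p(T_0,p^{k-n-1})$ but rather \emph{producing} $T\in\Lambda_n^+$ with $D(T)=p$ exactly (so that $F_p(T,X)$ is linear with constant term $1$, hence $\equiv 1\pmod p$ at $X=p^{k-n-1}$ automatically). The paper does this by exhibiting $T_1\in\Lambda_3^+$ with $D(T_1)=p$ (Remark~\ref{remarkS}(2), requiring a case split on $p\bmod 8$) and, for $n=8s+3$, setting $T=T_1\perp(\tfrac12 U)^{\perp s}$ with $U$ even unimodular of rank $8$.
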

\begin{Rem}
  By Theorem 3.5 in \cite{Bo-Ki-Ta},
  if $k = \frac{n + 1}{2} + (p - 1)$, then
  we have $\omega(F_{k}^{(n)}) = k$,
  where $\omega(F_{k}^{(n)})$ is the filtration of $F_{k}^{(n)} \mod{p}$.
\end{Rem}
\begin{proof}
Using the theorem of von Staudt-Clausen
and the fact $p>n$, we see that all values $\zeta(1-k)$ and
$\zeta (1+2i-k)$\,$(1\leq i\leq\tfrac{n-1}{2})$ are $p$-integral. Therefore we have $\alpha_p(n,k)\leq 0$.
We prove that
$F_{k}^{(n)}$ satisfies the required properties:
\\
(i)\;\; $F_{k}^{(n)}$ has $p$-integral Fourier coefficients,\\
(ii)\;\; $\varTheta (F_{k}^{(n)}) \equiv 0 \pmod{p}$,\\
(iii)\;\;$F_{k}^{(n)}$ is essential, i.e., $a(T;F_{k}^{(n)})\not\equiv 0\pmod{p}$ for some $T\in\Lambda_n^+$.
\vspace{2mm}
\\
First we prove (i). The proof is reduced to show that $p^{-\alpha_p(n,k)}\cdot a(T;E_k^{(n)})$
is $p$-integral for any $T\in\Lambda_n$.

For $T\in\Lambda_n$ with
$\text{rank}(T)=r\leq n$, we have
$$
T[U]=\begin{pmatrix}T_1 & 0 \\ 0 & 0_{n-r}\end{pmatrix}\qquad
T_1\in\Lambda_r^+,\;\;\text{and}\;\;
U\in\text{GL}_n(\mathbb{Z}).
$$
We denote by $A_{r,k}(T)$ the zeta-$L$ factor
of $a(T;E_k^{(n)})$, i.e.,
\begin{equation*}
\begin{split}
\label{Ark}
A_{r,k}(T)=& \zeta(1-k)^{-1}\prod_{i=1}^{[\frac{r}{2}]}\zeta(1+2i-2k)^{-1}\\
                  & \times \begin{cases}
                     2^{r/2}\,L(1+\tfrac{r}{2}-k;\chi_{T_1})  &\text{($r$: even)}\\
                     2^{(r+1)/2}                             &\text{($r$: odd)}.
                     \end{cases}
\end{split}
\end{equation*}
When $r$ is odd, $p^{-\alpha_p(n,k)}\cdot A_{r,k}(T)=p^{-\alpha_p(n,k)}\cdot A_{r,k}$
is $p$-integral because $\text{ord}_p(A_{r,k}(T))=\text{ord}_p(A_{r,k})\geq \alpha_p(n,k)$.
Hence
$p^{-\alpha_p(n,k)}\cdot a(T;E_k^{(n)})$ is $p$-integral for $T\in\Lambda_n$ with
odd rank.

In the case that $r$ is even, the $L$-factor
$L(1+\tfrac{r}{2}-k;\chi_{T_1})$ appears in $A_{r,k}(T)$.
We prove that $L(1+\tfrac{r}{2}-k;\chi_{T_1})$ is $p$-integral for even $r$\,
$(2\leq r\leq n-1)$.

The following result is known regarding the $L$-value $L(1-m;\chi)$\,
($m\in\mathbb{N}$,\, $\chi$:\,quadratic).

For a prime number $p>2$, the value $L(1-m;\chi)$ is $p$-integral
except for the case that
the conductor of $\chi$ is equal to $p$ and $m$ is an
odd multiple of $(p-1)/2$. Moreover, if we exclude this exceptional case,
$L(1-m;\chi)$ is a rational integer (cf. \cite{Ca}, Theorem 3).

We shall show that the integer
$k-\tfrac{r}{2}$\;$(2\leq r\leq n-1,\,r:\,\text{even})$ cannot be an odd multiple of $(p-1)/2$.
If we assume that $k-\tfrac{r}{2}=\tfrac{n+1}{2}+(p-1)\cdot t-\tfrac{r}{2}$
is a multiple of $(p-1)/2$, then we have
$n+1-r$ is a multiple of $p-1$. By the assumption $p>n$, this is impossible. Therefore,
$L(1+\tfrac{r}{2}-k;\chi_{T_1})$ is a rational integer.
This implies that $p^{-\alpha_p(n,k)}\cdot A_{r,k}(T)$ is $p$-integral.
Consequently, we see that
$p^{-\alpha_p(n,k)}\cdot a(T;E_k^{(n)})$ is $p$-integral for any $T\in\Lambda_n$ with
even rank.

Secondly we prove (ii), namely,
$$
\varTheta (F_{k}^{(n)}) \equiv 0 \pmod{p}.
$$
To do this, it suffices to show that, if $T\in\Lambda_n^+$ satisfies
$\text{det}(T) \not\equiv 0 \pmod{p}$, then the corresponding Fourier
coefficient $a(T;F_{k}^{(n)})$ satisfies
\begin{equation}
\label{Theta}
a(T;F_{k}^{(n)}) \equiv 0 \pmod{p}.
\end{equation}
Our proof is based on Katsurada's functional equation for $F_q(T,X)$.
\begin{Thm}
{\rm (Katsurada \cite{Ka})}
\label{Katsurada}
We assume that $n\in\mathbb{Z}_{>0}$ is odd, $q$ is a prime number, and
$T\in\Lambda_n^+$.
Then we have
\begin{equation}
\label{functionalK}
F_q(T,q^{-n-1}X^{-1})=\eta_q(T)(q^{(n+1)/2}X)^{-{\rm ord}_q(D(T))}F_q(T,X),
\end{equation}
where
$$
\eta_q(T)=h_q(T)({\rm det}(T),(-1)^{\tfrac{n-1}{2}}{\rm det}(T))_q(-1,-1)_q^{\tfrac{n^2-1}{8}},
$$
$h_q(T)$ is the Hasse invariant, and $(a,b)_q$ is the Hilbert symbol.
\end{Thm}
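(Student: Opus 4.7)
The plan is to prove the functional equation by induction on $n$, using an explicit description of $F_q(T,X)$ in terms of the Jordan decomposition of $T$ over $\mathbb{Z}_q$. First I would recall that $F_q(T,X)$ is the polynomial part of the local Siegel series $b_q(T,s)$, which is an orbital integral over symmetric matrices roughly of the form
$$b_q(T,s) = \sum_{R} \psi_q(\mathrm{tr}(TR))\,|\nu(R)|_q^{-s},$$
where $R$ runs over a suitable lattice in $\mathrm{Sym}_n(\mathbb{Q}_q)$ and $\nu(R)$ is the level. The substitution $X \leftrightarrow q^{-n-1}X^{-1}$, i.e.\ $s \leftrightarrow n+1-s$, is precisely the reflection symmetry one expects from the functional equation of the degenerate principal series of $\mathrm{Sp}_n(\mathbb{Q}_q)$, so the goal is to transfer that global/spectral symmetry into the polynomial identity claimed here.

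Next I would handle the tame case $q \neq 2$. Any $T \in \Lambda_n^+$ is $\mathrm{GL}_n(\mathbb{Z}_q)$-equivalent to a diagonal matrix $\mathrm{diag}(q^{a_1}u_1, \ldots, q^{a_n}u_n)$ with $u_i \in \mathbb{Z}_q^{\times}$ and $a_1 \le \cdots \le a_n$. For such diagonal $T$ there is an explicit closed form for $F_q(T,X)$ (going back to Kitaoka), which can be directly checked to satisfy the functional equation; the sign $\eta_q(T)$ then emerges as a product of local quadratic residue symbols that can be matched against $h_q(T)$ via Hilbert symbol identities. The induction step would use a reduction/recursion formula expressing $F_q(T,X)$ in terms of $F_q(T',X)$ for a matrix $T'$ of smaller rank or smaller Jordan type, and one verifies that both sides of the functional equation transform compatibly under this recursion.

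For $q = 2$ the Jordan decomposition is more delicate (requiring $\mathrm{GL}_n(\mathbb{Z}_2)$-equivalence to blocks of sizes $1$ and $2$), so I would redo the induction more carefully there, with particular attention to the dyadic Hilbert symbols. A more conceptual alternative, avoiding case-by-case analysis, is to derive the functional equation Fourier-analytically: applying the Weil representation intertwiner (or Poisson summation) to the defining orbital integral on $\mathrm{Sym}_n(\mathbb{Q}_q)$ automatically produces the reflection $s \leftrightarrow n+1-s$, and the resulting root number is identified with $\eta_q(T)$ via the Weil index attached to the quadratic form $R \mapsto \mathrm{tr}(TR^2)$ on $\mathrm{Sym}_n(\mathbb{Q}_q)$.

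The main obstacle will be pinning down the sign $\eta_q(T)$ precisely as
$$\eta_q(T) = h_q(T)\,(\det(T),(-1)^{(n-1)/2}\det(T))_q\,(-1,-1)_q^{(n^2-1)/8}.$$
Tracking these local invariants is delicate: the factor $(-1,-1)_q^{(n^2-1)/8}$ is the purely dimensional Weil index of the standard quadratic form on $\mathrm{Sym}_n(\mathbb{Q}_q)$, the Hilbert symbol $(\det(T),(-1)^{(n-1)/2}\det(T))_q$ records how the determinant interacts with a fixed orientation (and is responsible for the $(-1)^{(n-1)/2}$ twist corresponding to the discriminant of the associated quadratic space), and $h_q(T)$ absorbs everything else about the local isometry class of $T$. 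Cleanly matching these three contributions through either the induction or the Fourier-analytic computation is the computational heart of the proof, and the exponent $-\mathrm{ord}_q(D(T))$ on $q^{(n+1)/2}X$ on the right-hand side must be cross-checked against the degree of $F_q(T,X)$ at each inductive step.
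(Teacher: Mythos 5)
The paper offers no proof of this statement: it is quoted, with attribution, from Katsurada's article \cite{Ka}, where the functional equation of the local Siegel series is the main theorem and its proof occupies a large part of that paper. So there is no in-paper argument to compare yours against; the only question is whether your proposal could stand on its own as a proof.

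As written, it cannot. You have correctly identified the standard strategy --- interpret $F_q(T,X)$ as the polynomial part of a local Siegel series, reduce $T$ to a Jordan form over $\mathbb{Z}_q$, verify the equation for diagonal $T$ via an explicit Kitaoka-type formula, propagate it through a recursion in the rank, and treat $q=2$ separately --- and this is indeed close in spirit to what Katsurada does (his proof rests on recursion formulas for the Siegel series). But every step carrying actual content is deferred rather than executed: the explicit formula for diagonal $T$ is never written down, the recursion you intend to induct with is never stated, and the identification of the sign with $h_q(T)\,(\det(T),(-1)^{(n-1)/2}\det(T))_q\,(-1,-1)_q^{(n^2-1)/8}$ is explicitly labelled ``the computational heart'' and then not performed. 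That identification \emph{is} the theorem; without it you have at best argued that a functional equation of the shape $X\mapsto q^{-n-1}X^{-1}$ should hold with an undetermined sign, which is far weaker than what the application in Lemma \ref{keylemmaS} requires (there one needs the product formula $\prod_{q\le\infty}\eta_q(T)=1$ together with $\eta_\infty(T)=-1$ to force $\eta_q(T)=-1$ at some finite $q$, and this depends on the precise form of $\eta_q$). The exponent $-\mathrm{ord}_q(D(T))$ likewise requires verifying that $\deg_X F_q(T,X)=\mathrm{ord}_q(D(T))$, which you flag as a cross-check but do not carry out; the dyadic case, where $D(T)=2^{n-1}\det(T)$ differs from $\det(T)$ by a power of $2$, is exactly where this is delicate. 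In short, the outline points at the right literature but contains no proof.
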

The following is a key lemma of our proof.
\begin{Lem}
\label{keylemmaS}
We assume that $n \equiv \pm 3 \pmod{8}$ and $T\in\Lambda_n^+$.
Then there is a prime divisor $q$ of $D(T)$ satisfying
$$
F_q(T,q^{-\tfrac{n+1}{2}})=0.
$$
\end{Lem}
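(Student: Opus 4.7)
The plan is to fix $T\in\Lambda_n^+$ and evaluate the Katsurada functional equation (\ref{functionalK}) at the ``central point'' $X=q^{-(n+1)/2}$. There $q^{-n-1}X^{-1}=q^{-(n+1)/2}=X$, and the factor $(q^{(n+1)/2}X)^{-\mathrm{ord}_q(D(T))}=1$, so the identity collapses to
\[
F_q(T,q^{-(n+1)/2}) \;=\; \eta_q(T)\cdot F_q(T,q^{-(n+1)/2}).
\]
Hence $F_q(T,q^{-(n+1)/2})=0$ is equivalent to $\eta_q(T)=-1$, and it suffices to exhibit one prime divisor $q$ of $D(T)$ with $\eta_q(T)=-1$.

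The core of the proof is a product formula argument for $\eta_q(T)$ across all places of $\mathbb{Q}$. Each of the three factors in
$\eta_q(T)=h_q(T)\cdot(\det T,(-1)^{(n-1)/2}\det T)_q\cdot(-1,-1)_q^{(n^2-1)/8}$ satisfies a global product formula: Hilbert reciprocity gives $\prod_v(a,b)_v=1$ for any $a,b\in\mathbb{Q}^\times$, and applying it to a diagonalization of $T$ yields $\prod_v h_v(T)=1$. Consequently $\prod_v\eta_v(T)=1$, so
\[
\prod_{q\text{ finite}}\eta_q(T)\;=\;\eta_\infty(T).
\]

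Next I would evaluate $\eta_\infty(T)$. Since $T>0$, it is $\mathbb{R}$-equivalent to the identity form, so $h_\infty(T)=1$; and $\det T>0$ implies $(\det T,(-1)^{(n-1)/2}\det T)_\infty=1$. Only $(-1,-1)_\infty^{(n^2-1)/8}=(-1)^{(n^2-1)/8}$ survives. A direct check modulo $16$ shows that $(n^2-1)/8$ is odd precisely when $n\equiv\pm 3\pmod 8$, so under the hypothesis on $n$ we obtain $\eta_\infty(T)=-1$, and therefore $\prod_{q\text{ finite}}\eta_q(T)=-1$.

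Finally I would show that $\eta_q(T)=1$ for every odd prime $q\nmid D(T)$: such a $T$ is $\mathbb{Z}_q$-equivalent to a diagonal form with entries in $\mathbb{Z}_q^\times$, which makes $h_q(T)=1$, makes the middle Hilbert symbol of units trivial, and $(-1,-1)_q=1$ at odd $q$ forces the last factor to be $1$. Thus only primes dividing $D(T)$ (note $2\mid D(T)$ when $n\ge 2$) can contribute nontrivially, so the product $\prod_{q\mid D(T)}\eta_q(T)=-1$ forces at least one $q\mid D(T)$ with $\eta_q(T)=-1$. The main obstacle I anticipate is purely bookkeeping: making sure that the $h_q$ appearing in Katsurada's $\eta_q(T)$ is normalized to the same Hasse invariant for which $\prod_v h_v=1$ holds, and that the sign conventions of the Hilbert symbol at $\infty$ used above match those in (\ref{functionalK}).
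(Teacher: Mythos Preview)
Your proposal is correct and follows essentially the same route as the paper: evaluate Katsurada's functional equation at the central point $X=q^{-(n+1)/2}$, invoke the product formula $\prod_{v}\eta_v(T)=1$, and compute $\eta_\infty(T)=-1$ from $n\equiv\pm 3\pmod 8$. Your write-up is in fact more careful than the paper's terse argument---you explicitly check $h_\infty(T)=1$ and the middle Hilbert symbol at $\infty$, and you verify that the prime $q$ with $\eta_q(T)=-1$ must divide $D(T)$, a point the paper leaves implicit (it follows since $F_q(T,X)=1$ for $q\nmid D(T)$, forcing $\eta_q(T)=1$ via the functional equation).
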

\noindent
{\it Proof of the lemma.}
By the assumption $n \equiv \pm 3 \pmod{8}$, we have
$$(-1,-1)_\infty^{\tfrac{n^2-1}{8}}=-1.
$$
This implies $\eta_\infty (T)=-1$.
By the product formula of Hilbert symbol (i.e., $\prod_{q\leq \infty}\eta_q(T)=1$),
we see that there is a prime $q$ such that $\eta_q(T)=-1$. For this $q$, we
substitute $q^{-\tfrac{n+1}{2}}$ for $X$ in (\ref{functionalK}). This shows
$F_q(T,q^{-\tfrac{n+1}{2}})=0$, which completes the proof of the lemma.
\begin{flushright}
$\square$
\end{flushright}
\begin{Ex}
We give a short table of $\prod F_q(T,X)$ in the case that $n=3$.
\begin{table}[hbtp]
\caption{Example of $\prod F_q(T,X)$ in the case $n=3$ and $D(T)\leq 12$}
\begin{center}
\begin{tabular}{c|c||c|c}
$D(T)$ & $\prod F_q(T,X)$ & $D(T)$ & $\prod F_q(T,X)$ \\ \hline
$2$       & $ 1-2^2X$             & $9$         & $1-3^4X^2$ \\
$3$       & $1-3^2X$              & $10_1$    & $(1-2^2X)(1+5^2X)$ \\
$4$       & $1-2^4X^2$           & $10_2$    & $(1+2^2X)(1-5^2X)$ \\
$5$       & $1-5^2X$              & $11$       & $1-11^2X$             \\
$6_1$   & $(1+2^2X)(1-3^2X)$  & $12_1$   & $(1-2^2X+2^4X^2)(1-3^2X)$ \\
$6_2$   & $(1-2^2X)(1+3^2X)$  & $12_2$   &  $(1+2^2X+2^4X^2)(1-3^2X)$ \\
$7$      & $1-7^2X$               & $12_3$    & $(1+2^4X^2)(1-3^2X)$ \\
$8_1$   & $(1-2^2X)(1+2^4X^2)$ & $12_4$   & $(1-2^4X^2)(1+3^2X)$ \\
$8_2$   & $1-2^6X^3$               & $13$          & $1-13^2X$
\end{tabular}
\end{center}
\end{table}
\\
Here we used a suffix notation $D(T)_i$ when the $T$ has multiple genera.
The index is distinguished by their $2$-adic types (cf. \cite{K-W}).
\end{Ex}
\quad
We return to the proof of the theorem \ref{main1}.
We assume that
$n \equiv 3 \pmod{8}$
and prove
$a(T;F_{k}^{(n)}) \equiv 0 \pmod{p}$ under the condition
$\text{det}(T)\not\equiv 0\pmod{p}$.
(We need to exclude the case $n \equiv -3 \pmod{8}$
because the weight $k$ must be even.)
The condition $\text{det}(T)\not\equiv 0\pmod{p}$ implies that
$D(T)\not\equiv 0\pmod{p}$. Hence, by Lemma \ref{keylemmaS}, we have
\begin{align*}
\prod_{q\mid D(T)}F_q(T,q^{k-n-1})
              &=\prod_{q\mid D(T)}F_q(T,q^{-\tfrac{n+1}{2}+(p-1)\cdot t})\\
                                             &\equiv \prod_{q\mid D(T)}F_q(T,q^{-\tfrac{n+1}{2}})
                                               =0 \pmod{p}.
\end{align*}
Since $p^{-\alpha_p(n,k)}\cdot A_{n,k}(T)$ is a $p$-integral (in particular $p$-adic unit),
we obtain
$$
a(T;F_{k}^{(n)}) \equiv 0 \pmod{p}.
$$
Finally we shall prove that $F_{k}^{(n)}$ is essential.
\\
For this purpose, it suffices to show that there is a matrix $T\in\Lambda_n^{+}$
such that $D(T)=p$ because we can prove
$a(T;F_{k}^{(n)}) \not\equiv 0 \pmod{p}$ for such $T$ (note that $p^{-\alpha_p(n,k)}\cdot A_{n,k}(T)$
is $p$-adic unit).

We set $n=8s+3$.
In the ternary case, it is known that there is a matrix $T_1\in\Lambda_3^+$
satisfying $D(T_1)=2^2\text{det}(T_1)=p$ for any prime number $p$
(cf. Remark \ref{remarkS}, (2)). We set
$$
T=T_1\perp \underbrace{\tfrac{1}{2}U\perp\cdots \perp \tfrac{1}{2}U}_{s\, \text{times}}
\in \Lambda_n^+,
$$
where
$U$ is a
positive-definite even unimodular symmetric matrix of rank $8$.
Then the matrix $T$ satisfies the required property
$D(T)=p$.
This shows that $F_{k}^{(n)}$ is essential and
completes the proof of Theorem \ref{main1}.
\end{proof}
\begin{Rem}
\label{remarkS}
(1)\;
We consider the Eisenstein series
$$
E_k^{(n)}(Z,s)=\sum_{\binom{*\;*}{C\,D}\in \Gamma_\infty^{(n)}\backslash \Gamma^{(n)}}
\text{det}(CZ+D)^{-k}|\text{det}(CZ+D)|^{-s},\quad (Z,s)\in\mathbb{H}_n\times\mathbb{C}.
$$
The analytic properties of this series were studied by Weissauer, Shimura, and others.
Weissauer proved the following (cf. \cite{Weis}, $\S$ 14):

If $\frac{n+1}{2} \equiv 2 \pmod{4}$, then
$E_{\tfrac{n+1}{2}}^{(n)}(Z,s)$ is holomorphic at $s=0$; moreover,
\begin{equation}
\label{SEisenvanish}
E_{\tfrac{n+1}{2}}^{(n)}(Z,0) \equiv 0\quad(\text{identically vanishes}).
\end{equation}
Since the condition $\frac{n+1}{2} \equiv 2 \pmod{4}$ is equivalent to
$n \equiv 3 \pmod{8}$,  our Lemma \ref{keylemmaS} shows that
$E_{\tfrac{n+1}{2}}^{(n)}(Z,0)$ is a singular modular form, and thus
it identically vanishes (note that $(n+1)/2$ is not a singular weight).
Namely, Lemma \ref{keylemmaS} gives another
proof of (\ref{SEisenvanish}).\\
(2)\; In the proof of Theorem \ref{main1}, we used the fact that
there is an element $T_1\in\Lambda_3^+$ such that $D(T_1)=p$ for
any prime number $p$. In fact, we may take $T_1$ as follows:
$$
T_1=
\begin{pmatrix}
1 & \frac{1}{2} & \frac{1}{2}\\
\frac{1}{2} & 1 &  0  \\
\frac{1}{2} & 0 & 1
\end{pmatrix}\;\text{for}\;p=2,\;\;
T_1=
\begin{pmatrix}
1 & 0 & 0\\
0 & 1 &  \frac{1}{2}  \\
0 & \frac{1}{2} & \frac{p+1}{4}
\end{pmatrix}
\;\text{for}\;p\;\text{with}\;
p \equiv -1 \pmod{4}.
$$
In the case $p \equiv 5 \pmod{8}$, we may set
$T_1=
\begin{pmatrix}
1 & 0 & \frac{1}{2} \\
0 & 2 &  \frac{1}{2}  \\
\frac{1}{2} & \frac{1}{2} & \frac{p+3}{8}.
\end{pmatrix}$. Finally
we consider the case $p \equiv 1 \pmod{8}$. The following is
due to Schulze-Pillot:\\
Choose a prime $q$ with $q \equiv 3 \pmod{4}$,
$\left(\frac{p}{q}\right)=\left(\frac{q}{p}\right)=-1$, and
$a\in\mathbb{Z}$ with $a^2 \equiv -p \pmod{q}$.
Set
$$
T_1=
\begin{pmatrix}
\frac{a^2q+a^2+p}{q} & -a         & \frac{-a(q+1)}{2} \\
           -a            &  1          & \frac{q}{2}          \\
 \frac{-a(q+1)}{2}   & \frac{q}{2} & \frac{q(q+1)}{4}
\end{pmatrix}.
$$
Then  $T_1$ is positive definite and $D(T_1)=p$.
\end{Rem}
\section{Hermitian modular case}
Let $m$ be a positive integer and $\boldsymbol{K}=\mathbb{Q}(\sqrt{-D_{\boldsymbol{K}}})$
an imaginary quadratic field with discriminant $-D_{\boldsymbol{K}}<0$.
We denote by $\mathcal{O}_{\boldsymbol{K}}$ the ring of integers of $\boldsymbol{K}$.
Let $\chi_{\boldsymbol{K}}$ be the quadratic Dirichlet character of conductor $D_{\boldsymbol{K}}$
corresponding to the extension $\boldsymbol{K}/\mathbb{Q}$ by the global class field
theory. Denote by $\underline{\chi}_{\boldsymbol{K}}=\prod_v \underline{\chi}_{\boldsymbol{K},v}$
the idele class character which corresponds to $\chi_{\boldsymbol{K}}$.
\subsection{Hermitian modular forms}
For a $\mathbb{Q}$-algebra $R$, the group $SU(m, m)(R)$ is given
as
$$
SU(m, m)(R)=\left\{ g\in \text{SL}_{2m}(R\otimes_{\mathbb{Q}}\boldsymbol{K})\,
             \Big{|}\,
             g^*\begin{pmatrix} 0_m & -1_m\\ 1_m & 0_m\end{pmatrix}
             g
             =\begin{pmatrix} 0_m & -1_m\\ 1_m & 0_m\end{pmatrix}
             \right\},
$$
where $g^*={}^t\overline{g}$.

We set
$$
\Gamma_{\boldsymbol{K}}^{(m)}
= SU(m, m)(\mathbb{Q})\cap\text{SL}_{2m}  (\mathcal{O}_{\boldsymbol{K}}).
$$
We denote by $M_k(\Gamma_{\boldsymbol{K}}^{(m)})$ the space of Hermitian
modular forms of weight $k$ for $\Gamma_{\boldsymbol{K}}^{(m)}$. Any modular
form $F$ in $M_k(\Gamma_{\boldsymbol{K}}^{(m)})$ has a Fourier expansion
of the form
$$
F(Z)=\sum_{0\leq H\in\Lambda_m(\mathcal{O}_{\boldsymbol{K}})}a(H;F)q^H,\quad
q^H=\text{exp}(2\pi i\text{tr}(HZ)),\quad Z\in\mathcal{H}_m,
$$
where
\begin{align*}
& \mathcal{H}_m=\{\,Z\in M_m(\mathbb{C})\,\mid\, \tfrac{1}{2i}(Z-Z^{*})>0\,\}
\;(\text{the Hermitian upper half space}),\\
  & \Lambda_m(\mathcal{O}_{\boldsymbol{K}})=
    \{\,H=(h_{jl})\in M_m(\boldsymbol{K})\,\mid\,
    H^{*} = H, \
    h_{jj}\in\mathbb{Z},\,
    \sqrt{-D_{\boldsymbol{K}}}\,h_{jl}\in\mathcal{O}_{\boldsymbol{K}}\,\}.
\end{align*}
We also set $\Lambda_m^+(\mathcal{O}_{\boldsymbol{K}})=\{\,H\in
\Lambda_m(\mathcal{O}_{\boldsymbol{K}})\,\mid\, H>0\,\}$.
\\
\\
We can also define the theta operator as in the case of Siegel modular forms:
$$
\varTheta : F=\sum a(H;F)q^H\,\longmapsto
\varTheta (F):=\sum a(H;F)\cdot\text{det}(H)q^H.
$$
\subsection{Hermitian Eisenstein series}
We set
$$
\Gamma_{\boldsymbol{K},\infty}^{(m)}=
\left\{\begin{pmatrix}A&B\\ C&D\end{pmatrix}\in \Gamma_{\boldsymbol{K}}^{(m)}\,\Big{|}\,
C=0_m\,\right\}.
$$
For a positive even integer $k > 2m$, we define Eisenstein series of weight $k$ by
$$
\mathcal{E}_{k}^{(m)}(Z)
=\sum_{M=\binom{*\;*}{C\,D}\in\Gamma_{\boldsymbol{K},\infty}^{(m)}\backslash
\Gamma_{\boldsymbol{K}}^{(m)}}
\text{det}(CZ+D)^{-k},\quad Z\in \mathcal{H}_m.
$$
For a prime number $q$, we set
$\mathcal{O}_{\boldsymbol{K},q} = \mathcal{O}_{\boldsymbol{K}}\otimes_{\mathbb{Z}}\mathbb{Z}_{q}$
and set
\begin{align*}
  & \Lambda_{m}(\mathcal{O}_{\boldsymbol{K}, q})\\
  & = \left\{
    H = (h_{jl}) \in M_{m}(\boldsymbol{K}\otimes_{\mathbb{Q}}\mathbb{Q}_{q})
    \bigm |
    H^{*} = H, \
    h_{jj} \in \mathbb{Z}_{q}, \
    \sqrt{-D_{\boldsymbol{K}}}\,h_{jl} \in
    \mathcal{O}_{\boldsymbol{K}, q}
  \right\}.
\end{align*}
Let $H \in \Lambda_{m}(\mathcal{O}_{\boldsymbol{K}})$ with $H \geq 0$ and set
$r = \text{rank}_{\boldsymbol{K}} H$.
For each prime number $q$, take $U_{q} \in \text{GL}_{m}(\mathcal{O}_{\boldsymbol{K},q})$ so that
\begin{equation}
  \label{eq:huq-prime}
  H[U_{q}] = \begin{pmatrix}H_{q}'& 0 \\ 0 & 0 \end{pmatrix}
\end{equation}
with $H_{q}' \in \Lambda_{r}(\mathcal{O}_{\boldsymbol{K},q})$.
Here, for $A, B \in \mathrm{Res}_{\boldsymbol{K}/\mathbb{Q}}M_{n}$, we define
\begin{equation*}
  A[B] := B^{*}AB.
\end{equation*}
For $H \in \Lambda_{r}(\mathcal{O}_{\boldsymbol{K},q})$ with $\det H \ne 0$,
we denote by $\mathcal{F}_q(H, X) \in \mathbb{Z}[X]$
the polynomial given in \cite{Ikeda}, $\S$  2 (Ikeda denotes it by $F_{p}(H; X)$).
Then the polynomial $\mathcal{F}_q(H_{q}', X)$ does not depend
on the choice of $U_{q}$. Therefore, we denote it by $\mathcal{F}_q(H, X)$.
For $H \in \Lambda_{r}(\mathcal{O}_{\boldsymbol{K}})$ (resp. $\in
\Lambda_{r}(\mathcal{O}_{\boldsymbol{K}, q})$) with $\det H \ne 0$, we define
\begin{equation*}
  \gamma(H)=(-D_{\boldsymbol{K}})^{[r/2]}\text{det}(H)\in
  \mathbb{Z} \quad
  (\text{resp.} \in \mathbb{Z}_{q}).
\end{equation*}

\begin{Thm}
  \label{thm:explicitH}
  Let $H \in \Lambda_{m}(\mathcal{O}_{\boldsymbol{K}})$ with $H \ge 0$ and set
  $r = {\rm rank}_{\boldsymbol{K}} H$.
  Then the $H$th Fourier coefficient $a(H; \mathcal{E}_{k}^{(m)})$ of
  the Hermitian Eisenstein series $\mathcal{E}_{k}^{(m)}$ is given as follows:
  \begin{equation}
  \label{explicitH}
    2^{r}
    \left( \prod_{i=1}^{r}L(i-k, \chi_{\boldsymbol{K}}^{i-1})^{-1} \right)
    \left( \prod_{q: {\rm prime}}
      \mathcal{F}_q(H; q^{k - 2r}) \right).
  \end{equation}
  Here we understand that $L(i-k, \chi_{\boldsymbol{K}}^{i-1}) = \zeta(i-k)$ if $i$ is odd.
\end{Thm}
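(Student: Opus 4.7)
The plan is to obtain the formula by the standard Siegel--Shimura computation of the Fourier expansion of the Hermitian Eisenstein series, the non-archimedean input being Ikeda's explicit determination of the local Siegel series in \cite{Ikeda}. First I would introduce the Eisenstein series with an auxiliary complex parameter $s$,
$$
\mathcal{E}_k^{(m)}(Z,s)=\sum_{M\in \Gamma_{\boldsymbol{K},\infty}^{(m)}\backslash\Gamma_{\boldsymbol{K}}^{(m)}}\det(CZ+D)^{-k}\,\abs{\det(CZ+D)}^{-2s},
$$
which converges absolutely for $\mathrm{Re}(s)\gg 0$ and has holomorphic continuation to $s=0$ giving the original $\mathcal{E}_k^{(m)}$ under our weight assumption $k>2m$. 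Unfolding the sum against the Siegel parabolic and integrating over $\mathrm{Re}(Z)$, the $H$-th Fourier coefficient factors as an archimedean confluent hypergeometric integral times an Euler product $\prod_q b_q(H,s)$ of local Hermitian Siegel series.

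Next I would specialize to $s=0$. The archimedean factor collapses to an elementary expression involving gamma factors and $\pi$-powers, and when combined with the normalization one is left with the global inverse $L$-factors $\prod_{i=1}^r L(i-k,\chi_{\boldsymbol{K}}^{i-1})^{-1}$ (using $L(i-k,\chi_{\boldsymbol{K}}^{i-1})=\zeta(i-k)$ when $i$ is odd, as stated), the constant $2^r$, and the product over finite primes of the local Siegel series evaluated at the arithmetic point. Then, invoking Ikeda's computation (\cite{Ikeda}, §2), I would replace each local Siegel series by the polynomial $\mathcal{F}_q(H;X)$: Ikeda shows that $b_q(H,s)$ equals a product of local Euler factors (which fill in the missing Euler factors of the $L$-functions and of the trivial zeta factors) times $\mathcal{F}_q(H;q^{k-2r})$. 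The constant term $1$ of $\mathcal{F}_q$ and the polynomial nature make the resulting product over $q$ finite.

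Finally, for the degenerate case $\mathrm{rank}_{\boldsymbol{K}}H=r<m$, I would use \eqref{eq:huq-prime}: locally one may transform $H$ so that only the nonsingular $r\times r$ block $H_q'$ contributes to the local Siegel series, which reconciles the appearance of $\mathcal{F}_q(H;X)=\mathcal{F}_q(H_q';X)$ and explains why only $r$ of the $L$-values survive (the remaining $(m-r)$ of them cancel against poles/zeros in the archimedean factor; this is essentially the $\Phi$-operator identity $\Phi^{m-r}\mathcal{E}_k^{(m)}\propto\mathcal{E}_k^{(r)}$ on the nonsingular-rank stratum). The main obstacle is the bookkeeping of normalizations, particularly at the primes dividing $D_{\boldsymbol{K}}$ where the ramification of $\boldsymbol{K}/\mathbb{Q}$ changes both the definition of $\chi_{\boldsymbol{K},q}$ and the local Siegel series; however, once Ikeda's explicit formula from \cite{Ikeda} is granted, all factors align and \eqref{explicitH} follows after regrouping.
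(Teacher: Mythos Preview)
Your outline matches the paper's argument in structure---unfold against the Siegel parabolic, factor into an archimedean integral times local Siegel series, identify the local pieces via Ikeda's polynomials, and reassemble---but two concrete steps are missing from your sketch, and one of your claims is not quite right. Ikeda's definition in \cite{Ikeda}, \S 2 yields the local Siegel series as (local Euler factors)$\,\times\,\mathcal{F}_q(H_q',q^{-k})$, with argument $q^{-k}$, \emph{not} $q^{k-2r}$; the paper then applies Ikeda's functional equation (Theorem~\ref{Ikeda}) in the form
\[
\mathcal{F}_q(H,q^{-k})=\lvert\gamma(H)\rvert_q^{\,k-r}\,\underline{\chi}_{\boldsymbol{K},q}(\gamma(H))^{m-1}\,\mathcal{F}_q(H,q^{k-2r}),
\]
after which the extra factors cancel by the product formula over all places (using that $H'_v[\alpha_v^{-1}]=H'$ is global). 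Second, the Euler product naturally produces $L_r(k)=\prod_{i=0}^{r-1}L(k-i,\chi_{\boldsymbol{K}}^{i})^{-1}$ at \emph{positive} arguments; the paper applies the functional equations of the Dirichlet $L$-functions to pass to the values $L(i-k,\chi_{\boldsymbol{K}}^{i-1})$, and the resulting $\Gamma$- and $\pi$-factors are precisely what cancel Shimura's archimedean $\xi(\eta,H'_\infty,k,0)$ and isolate the constant $2^{r}$. Without these two transformations your outline does not yet land on the stated form \eqref{explicitH}.

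On the degenerate case $r<m$: the paper does not invoke the $\Phi$-operator identity but works directly via the local change of variables \eqref{eq:huq-prime} at every place (including $\infty$), reducing the integral over $S_m$ to one over $S_r$ and picking up factors $\lvert\det(\alpha_v\alpha_v^{*})\rvert_v^{\,r-k}$, which again collapse by the product formula. Your $\Phi$-operator remark is a reasonable heuristic, but it is not how the argument is actually completed.
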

\begin{Rem}
(1)\;The product over all primes $q$ is actually a finite product.
  The polynomial $\mathcal{F}_q(H; X)$ is a $\mathbb{Z}$-coefficient polynomial of
  degree $\mathrm{ord}_{q}\left(\gamma(H_{q}'))\right)$ with the constant term $1$
  (see Theorem \ref{Ikeda}). Here $H_{q}'$ is the matrix in \eqref{eq:huq-prime}.\\
(2)\;This formula is also stated in \cite{Ikeda} for the case $\det H \ne 0$.
\end{Rem}
We shall prove Theorem \ref{thm:explicitH} in \S \ref{sec:proof-theor-refthm:1}.
The second main result is as follows.
\begin{Thm}
\label{main2}
Let $m$ be a positive integer such that $m \equiv 2 \pmod{4}$.
Assume that $p>m+1$ is a prime number such that $D_{\boldsymbol{K}} \not\equiv 0 \pmod{p}$.
For any positive integer $t$,
We define a constant multiple of Eisenstein series by
$$
G_{k}^{(m)}:=p^{-\beta_p(m,k)}\cdot \mathcal{E}_{k}^{(m)}.
$$
Here
\begin{align*}
  k&:=m+(p-1)\cdot t\\
  \beta_p(m,k)&:=\text{ord}_p\left(\prod_{i=1}^m L(i-k,\chi_{\boldsymbol{K}}^{i-1})^{-1} \right),
\end{align*}
where $L(i-k, \chi_{\boldsymbol{K}}^{i-1}) = \zeta(i-k)$ if $i$ is odd as in Theorem \ref{explicitH}.
Then for any positive integer $t$,
the modular form $G_{k}^{(m)}$ has $\mathbb{Z}_{(p)}$ integral Fourier coefficients and
satisfies
$$
\varTheta (G_{k}^{(m)}) \equiv 0 \pmod{p}.
$$
Moreover $G_{k}^{(m)}$ is essential.
\end{Thm}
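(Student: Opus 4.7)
The plan is to transport the proof of Theorem \ref{main1} into the Hermitian setting, invoking Ikeda's functional equation for $\mathcal{F}_{q}(H,X)$ in place of Katsurada's for $F_{q}(T,X)$ and the explicit formula \eqref{explicitH} in place of \eqref{explicit}. I would establish, in order: (i) $G_{k}^{(m)}$ has $p$-integral Fourier coefficients; (ii) $\varTheta(G_{k}^{(m)}) \equiv 0 \pmod{p}$; and (iii) $G_{k}^{(m)}$ is essential.

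For (i), fix $H \in \Lambda_{m}(\mathcal{O}_{\boldsymbol{K}})$ of rank $r$ and consider the zeta/$L$-factor $2^{r}\prod_{i=1}^{r}L(i-k,\chi_{\boldsymbol{K}}^{i-1})^{-1}$ appearing in \eqref{explicitH}. If every $L(i-k,\chi_{\boldsymbol{K}}^{i-1})$ for $1 \le i \le m$ is $p$-integral, then each truncated product has $p$-adic valuation $\ge \beta_{p}(m,k)$, so $p^{-\beta_{p}(m,k)}$ times it is $p$-integral. For odd $i$, $\zeta(i-k) = -B_{k-i+1}/(k-i+1)$ is $p$-integral by von Staudt--Clausen, since $k - i + 1 \equiv m - i + 1 \pmod{p-1}$ with $1 \le m - i + 1 \le m < p - 1$, so $(p-1) \nmid (k - i + 1)$. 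For even $i$, $L(i-k,\chi_{\boldsymbol{K}})$ falls under the Carlitz bound cited in the proof of Theorem \ref{main1}; the conductor of $\chi_{\boldsymbol{K}}$ divides $D_{\boldsymbol{K}}$ and $p \nmid D_{\boldsymbol{K}}$, so the conductor cannot be $p$ and the exceptional case is avoided, yielding an integer.

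For (ii), the key step is the Hermitian analogue of Lemma \ref{keylemmaS}: under $m \equiv 2 \pmod{4}$, for every $H \in \Lambda_{m}^{+}(\mathcal{O}_{\boldsymbol{K}})$ there exists a prime $q \mid \gamma(H)$ with $\mathcal{F}_{q}(H,q^{-m}) = 0$. Ikeda's functional equation takes the self-dual shape
\[
\mathcal{F}_{q}(H,q^{-2m}X^{-1}) = \varepsilon_{q}(H)\,(q^{m}X)^{-\mathrm{ord}_{q}(\gamma(H))}\,\mathcal{F}_{q}(H,X),
\]
with a local sign $\varepsilon_{q}(H)$ expressible through Hilbert symbols and satisfying a global product formula $\prod_{q \le \infty}\varepsilon_{q}(H) = 1$. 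Under $m \equiv 2 \pmod{4}$ the archimedean sign $\varepsilon_{\infty}(H)$ is $-1$, so some finite $\varepsilon_{q}(H)$ with $q \mid \gamma(H)$ must equal $-1$; the substitution $X = q^{-m}$ at that prime forces $\mathcal{F}_{q}(H,q^{-m}) = 0$. For $H$ with $\det H \not\equiv 0 \pmod{p}$, the hypothesis $p \nmid D_{\boldsymbol{K}}$ yields $p \nmid \gamma(H)$, so every $q \mid \gamma(H)$ is prime to $p$, and from $k - 2m = -m + (p-1)t$ we get $q^{k-2m} \equiv q^{-m} \pmod{p}$. The key lemma then gives $\prod_{q \mid \gamma(H)} \mathcal{F}_{q}(H,q^{k-2m}) \equiv 0 \pmod{p}$, which combined with the $p$-adic unit from (i) yields $a(H;G_{k}^{(m)}) \equiv 0 \pmod{p}$.

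For (iii), I would exhibit $H \in \Lambda_{m}^{+}(\mathcal{O}_{\boldsymbol{K}})$ with $\gamma(H) = p$ by orthogonally summing a suitably chosen rank-$2$ Hermitian form over $\mathcal{O}_{\boldsymbol{K}}$ with copies of a positive definite unimodular Hermitian $\mathcal{O}_{\boldsymbol{K}}$-lattice, using the parity $m \equiv 2 \pmod{4}$. For such $H$ only $\mathcal{F}_{p}(H,p^{k-2m})$ enters the product, and since $k - 2m > 0$ (from $p > m+1$) and $\mathcal{F}_{p}(H,X)$ has constant term $1$, we obtain $\mathcal{F}_{p}(H,p^{k-2m}) \equiv 1 \pmod{p}$, making $a(H;G_{k}^{(m)})$ a $p$-adic unit. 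The main obstacle will be the key lemma in step (ii): pinning down Ikeda's local sign $\varepsilon_{q}(H)$ precisely enough to apply the Hilbert product formula and verifying that $\varepsilon_{\infty}(H) = -1$ exactly when $m \equiv 2 \pmod{4}$, the Hermitian counterpart of the Siegel computation $(-1,-1)_{\infty}^{(n^{2}-1)/8} = -1$ for $n \equiv \pm 3 \pmod{8}$.
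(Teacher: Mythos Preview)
Your approach matches the paper's almost exactly: the same three-step structure, the same use of the explicit formula \eqref{explicitH}, and the same key lemma derived from Ikeda's functional equation. Two clarifications are in order.

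First, the ``main obstacle'' you flag in step (ii) dissolves once you look at the precise form of Ikeda's sign: it is not an elaborate Hilbert-symbol expression but simply $\underline{\chi}_{\boldsymbol{K},q}(\gamma(H))^{m-1}$, and since $m-1$ is odd this is just $\underline{\chi}_{\boldsymbol{K},q}(\gamma(H))$. The archimedean component $\underline{\chi}_{\boldsymbol{K},\infty}$ is the sign character (because $\boldsymbol{K}$ is imaginary quadratic), so $\underline{\chi}_{\boldsymbol{K},\infty}(\gamma(H))=-1$ reduces to $\gamma(H)<0$. But $\gamma(H)=(-D_{\boldsymbol{K}})^{m/2}\det(H)$ with $m/2$ odd and $\det(H)>0$, hence $\gamma(H)<0$; the product formula for the idele class character then furnishes the required prime $q$. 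This is the paper's Lemma \ref{keylemmaH}, and it is shorter than the Siegel analogue, not harder.

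Second, in step (iii) you want $\gamma(H)=-p$, not $\gamma(H)=p$: by the computation just made, $\gamma(H)$ is always negative for positive-definite $H$ when $m\equiv 2\pmod 4$. The paper builds such an $H$ as $H_{1}\perp \tfrac{1}{2}W\perp\cdots\perp\tfrac{1}{2}W$ with $H_{1}\in\Lambda_{2}^{+}(\mathcal{O}_{\boldsymbol{K}})$ satisfying $\gamma(H_{1})=-p$ (cf.\ \cite{M-N}) and $W$ a rank-$4$ even unimodular Hermitian matrix with $\det(W)=(2/\sqrt{D_{\boldsymbol{K}}})^{4}$ (cf.\ \cite{D-K}); your outline is the same, modulo this sign.
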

\begin{proof}
Since $p>m+1$, we see that the weight $k=m+(p-1)\cdot t$ is
greater than $2m$, so the condition on the convergence of
$\mathcal{E}_{k}^{(m)}$ is fulfilled.

By the assumptions on $p$ and $k$, we see that the each factor $L(i-k, \chi_{\boldsymbol{K}}^{i-1})$
is $p$-integral, so $\beta_p(m,k)\leq 0$.

First we prove the $p$-integrality of $G_{k}^{(m)}$. We set
$$
B_{r,k}:=\prod_{i=1}^{r}L(i-k, \chi_{\boldsymbol{K}}^{i-1})^{-1},
$$
which is the $L$-facor appearing in the Fourier coefficient $a(H;\mathcal{E}_{k}^{(m)})$
for $H$ for $r=\text{rank}(H)$. Since each factor $L(i-k,\chi_{\boldsymbol{K}}^{i-1})$ is
$p$-integal, we see that $p^{-\beta_p(m,k)}\cdot B_{r,k}$ is $p$-integral. Consequently,
$p^{-\beta_p(m,k)}\cdot a(H;\mathcal{E}_{k}^{(m)})$ is $p$-integral for any
$H\in \Lambda_m(\mathcal{O}_{\boldsymbol{K}})$.

Next we show that
$
\varTheta(G_{k}^{(m)})
\equiv 0 \pmod{p}.
$
As in the case of Siegel modular forms, it is sufficient to show  the following:

If $H\in\Lambda_m^+(\mathcal{O}_{\boldsymbol{K}})$ satisfies
$\text{det}(H) \not\equiv 0 \pmod{p}$, then
\begin{equation}
\label{congH}
a(H;G_{k}^{(m)}) \equiv 0 \pmod{p}.
\end{equation}
For the proof, we use the functional equation for $\mathcal{F}_q(H,X)$ due to Ikeda.
\begin{Thm}{\rm (Ikeda \cite{Ikeda})}
  \label{Ikeda}
  For $H\in\Lambda_m(\mathcal{O}_{\boldsymbol{K, q}})$ with
  ${\rm det}(H)\ne 0$, the polynomial $\mathcal{F}_q(H,X)$ has the functional equation
  \begin{equation}
    \label{functionalH}
    \mathcal{F}_q(H,q^{-2m}X^{-1})=\underline{\chi}_{\boldsymbol{K}, q}(\gamma(H))^{m-1}(q^mX)^{-{\rm ord}_q(\gamma(H))}
    \mathcal{F}_q(H,X).
  \end{equation}
\end{Thm}
The following is a key lemma in the case of Hermitian modular forms.
\begin{Lem}
\label{keylemmaH}
Assume that $m \equiv 2 \pmod{4}$ and $H\in\Lambda_m^+(\mathcal{O}_{\boldsymbol{K}})$.
Then there is a prime divisor $q$ of $\gamma(H)$ such that
\begin{equation}
\label{vanishingH}
\mathcal{F}_q(H,q^{-m})=0.
\end{equation}
\end{Lem}
\noindent
{\it Proof of the lemma.}
By $m \equiv 2 \pmod{4}$, we see that $\gamma(H)<0$, and so
$\underline{\chi}_{\boldsymbol{K}, \infty}(\gamma(H))=-1$. By the product formula of the idele
class character, there is a prime number $q$ such that
$\underline{\chi}_{\boldsymbol{K}, q}(\gamma(H))=-1$. In view of the functional equation
(\ref{functionalH}), we obtain $\mathcal{F}_q(H,q^{-m})=0$.
\begin{flushright}
$\square$
\end{flushright}
We return to the proof of (\ref{congH}).
Since $\text{det}(H) \not\equiv 0 \pmod{p}$,
we see that $\gamma(H)\not\equiv 0 \pmod{p}$.
(It should be noted that $D_{\boldsymbol{K}} \not\equiv 0 \pmod{p}$.)
This implies that
\begin{align*}
\prod_{q\mid \gamma(H)}\mathcal{F}_q(H,q^{k-2m})&=
\prod_{q\mid \gamma(H)}\mathcal{F}_q(H,q^{-m+(p-1)\cdot t})\\
& \equiv \prod_{q\mid \gamma(H)}\mathcal{F}_q(H,q^{-m})=0 \pmod{p}.
\end{align*}
Therefore,
\begin{align*}
a(H;G_{k}^{(m)}) & =(\text{a}\, p\text{-adic integer}\,C)\times \prod_{q\mid \gamma(H)}\mathcal{F}_q(H,q^{k-2m})\\
       & \equiv C\times 0=0 \pmod{p}.
\end{align*}
Finally we prove that $G_{k}^{(m)}$ is essential.
It is enough to show the existence of
$H\in\Lambda_m^+(\mathcal{O}_{\boldsymbol{K}})$ with $\gamma(H)=-p$ because
we have $a(H;G_{k}^{(m)}) \not\equiv 0 \pmod{p}$ for such $H$. Namely we can prove that
$p^{-\beta_p(m,k)}\cdot B_{m,k}$ is $p$-adic unit and
$$\prod_{q\mid \gamma(H)}\mathcal{F}_q(H,q^{k-2m}) \not\equiv 0 \pmod{p}
$$ for such $H$.

We set $m=4s+2$.
First we take a matrix $H_1\in\Lambda_2^+(\mathcal{O}_{\boldsymbol{K}})$ with
$\gamma(H_1)=-p$
(for the existence of $H_1$, see, e.g., \cite{M-N}, Lemma 3.1).

Next we take a positive-definite even unimodular Hermitian matrix $W$ of
rank 4 such that $\text{det}(W)=(2/\sqrt{D_{\boldsymbol{K}}})^4$.
An explicit formula of such a matrix is given in
\cite{D-K}, Lemma 1. Then the matrix
$$
H=H_1\perp \underbrace{\tfrac{1}{2}W\perp\cdots \perp \tfrac{1}{2}W}_{s\,\text{times}}
\in \Lambda_m^+(\mathcal{O}_{\boldsymbol{K}}),
$$
satisfies $\gamma(H)=-p$.
This shows that $G_{k}^{(m)}$ is essential and
completes the proof.
\end{proof}
\begin{Rem}
(1) As in the case of Siegel modular forms, we consider the Eisenstein series
\begin{align*}
&
\mathcal{E}_{k}^{(m)}(Z,s)
=\sum_{M=\binom{*\;*}{C\,D}\in\Gamma_{\boldsymbol{K},\infty}^{(m)}\backslash
\Gamma_{\boldsymbol{K}}^{(m)}}
\text{det}(CZ+D)^{-k}|\text{det}(CZ+D)|^{-s},\\
& (Z,s)\in \mathcal{H}_m\times\mathbb{C}.
\end{align*}
We assume that $m \equiv 2 \pmod{4}$. In this case,
it is known that $\mathcal{E}_{m}^{(m)}(Z,s)$ is holomorphic in $s$ (e.g., cf. Shimura \cite{Sh}).
Moreover, by Lemma \ref{keylemmaH}, we have
$$
\mathcal{E}_m^{(m)}(Z,0) \equiv 0\quad(\text{identically vanishes}).
$$
(2) For the case that $m=2$, the mod $p$ vanishing property of
$\varTheta (\mathcal{E}_k^{(2)})$ has previously been studied (Kikuta-Nagaoka \cite{K-N}).
\end{Rem}
\subsection{Proof of Theorem \ref{thm:explicitH}}
  In this proof, we denote $SU(m, m)$ by $G_{m}$.
  For $g \in G_{m}$, we define $a_{g}, b_{g}, c_{g}, d_{g} \in M_{m}$, such that
  \begin{math}
    g =
    \begin{pmatrix}
      a_{g} & b_{g}\\
      c_{g} & d_{g}
    \end{pmatrix}.
  \end{math}
  For each place $v$ of $\mathbb{Q}$, we set $\boldsymbol{K}_{v}
  = \boldsymbol{K} \otimes_{\mathbb{Q}}\mathbb{Q}_{v}$.
  For a $\mathbb{Q}$-algebra $R$, we set
  \begin{equation*}
    S_{m}(R) =
    \left\{ g \in M_{m}(R \otimes_{\mathbb{Q}}\boldsymbol{K})\;\Big{|}\;
      g^{*} = g
    \right\}.
  \end{equation*}
  For $x \in S_{m}$, we set
  \begin{equation*}
    \nu_{m}(x) =
    \begin{pmatrix} 1_m & x \\ 0_m & 1_m\end{pmatrix}
    \in G_{m}.
  \end{equation*}
  For $\alpha \in \text{Res}_{\boldsymbol{K}/\mathbb{Q}}\text{GL}_{m}$ with
  $\det \alpha = \det \overline{\alpha}$, we set
  \begin{equation*}
    \mu_{m}(\alpha) =
    \begin{pmatrix} \alpha & 0_m \\ 0_m & \left( \alpha^{*} \right)^{-1}
    \end{pmatrix}
    \in G_{m}.
  \end{equation*}
  We define the Siegel parabolic subgroup $P_{m}$ of $G_{m}$ as follows.
  \begin{equation*}
    P_{m} = \left\{
      g \in G_{m}\Big{|} c_{g} = 0_m
    \right\}.
  \end{equation*}
  For a place $v$ of $\mathbb{Q}$, we define a maximal compact subgroup $C_{v}$ as follows:
  \begin{equation*}
    C_{v} =
    \begin{cases}
      \left\{g \in G_{m}(\mathbb{R})\bigm | g \cdot i = g \right\} & \text{if } v = \infty,\\
      G(\mathbb{Q}_{v}) \cap \text{GL}_{2m}(\mathcal{O}_{\boldsymbol{K},q}) & \text{if } v < \infty.
      \ \end{cases}
  \end{equation*}
  Then the Iwasawa decomposition $G_{m}(\mathbb{Q}_{v}) = P_{m}(\mathbb{Q}_{v}) C_{v}$
  for each place $v$ of $\mathbb{Q}$ holds.
  For each place $v$ of $\mathbb{Q}$, we define a function $\phi_{n, v}$ on
  $G_{m}(\mathbb{Q}_{v})$ as follows.
  \begin{equation*}
    \phi_{n, v}(yw) = \left|\text{det}\, a_{y}\right|_{v}^{k},
  \end{equation*}
  where $y \in P_{m}(\mathbb{Q}_{v})$ and $w \in C_{v}$.
  We note that $\det a_{y} \in \mathbb{Q}_{v}^{\times}$ by \cite{Sh}, Lemma 1.1.
  Here we take the norm $|\cdot|_{v}$ so that $\left|\cdot\right|_{\infty}$
  is the usual Euclidean norm of $\mathbb{R}$
  and $\left|q\right|_{v}=q^{-1}$ if $v = q$ is a finite place.
  For each place $v$ of $\mathbb{Q}$,
  we take a Haar measure $\mu_{v}(x)$ on $S_{m}(\mathbb{Q}_{v})$ as in \cite{Sh}
  $\S$ 3, (3. 19).
  Further, for each place $v$ of $\mathbb{Q}$, we take an additive character $\textbf{e}_{v}$
  of $\mathbb{Q}_{v}$ by
  $\textbf{e}_{\infty}(x) = \textbf{e}(x)$ for $x \in \mathbb{R}$.
  If $v = q$ is a finite place, then we take $\textbf{e}_{q}$ so that
  $\textbf{e}_{q}(x) = \textbf{e}(-x)$ for $x \in \mathbb{Z}[q^{-1}]$.

  Let $H \in \Lambda_{m}(\mathcal{O}_{\boldsymbol{K}})$ with $H \geq 0$ and set $r = \text{rank}\, H$.
  We take $U \in \text{SL}_{m}(\boldsymbol{K})$ such that
  \begin{equation*}
    H[U] = \begin{pmatrix} H' & 0 \\ 0 & 0_{m-r}\end{pmatrix}.
  \end{equation*}
  For each place $v$ of $\mathbb{Q}$,
  we take matrices $U_{v}$ and $H_{v}'$ as follows.
  For each prime number $q$, we take $U_{q} \in \text{SL}_{m}(\mathcal{O}_{\boldsymbol{K},q})$ so that
  \begin{equation*}
    H[U_{q}] = \begin{pmatrix} H'_{q} & 0 \\ 0 & 0_{m-r}\end{pmatrix}.
  \end{equation*}
  We take $U_{\infty} \in SU_{m}(\mathbb{C})$ so that
  \begin{equation*}
    H[U_{\infty}] = \begin{pmatrix} H'_{\infty} & 0 \\ 0 & 0_{m-r}\end{pmatrix}.
  \end{equation*}
  Then for each place $v$ of $\mathbb{Q}$, by the choice of $U_{v}$, there exists
  $\alpha_{v} \in \text{GL}_{r}(\boldsymbol{K}_{v})$ and
  $\beta_v \in \text{GL}_{m-r}(\boldsymbol{K}_{v})$ such that
  \begin{equation*}
    U^{-1}U_{v} = \begin{pmatrix} \alpha_v & 0 \\ * & \beta_v\end{pmatrix}.
  \end{equation*}

  Then by a similar argument to that in \cite{Sh}, \cite[Proposition 4.2]{Take},
  (though there is a missing factor in \cite[Proposition 4.2]{Take}),
  we have the following.
  \begin{equation*}
    a(H; \mathcal{E}_k^{(m)})\textbf{e}(i\text{tr} HY) =
    c_{\mu}
    a_{\infty}(H, Y, k)
    \prod_{q: \text{ prime}}a_{q}(H, k).
  \end{equation*}
  Here we set $Y = \tfrac{Z - Z^{*}}{2i}$.
  The factor $c_{\mu}$ is given as
  \begin{equation*}
    c_{\mu} = 2^{r(r-1)/2}D_{\boldsymbol{K}}^{-r(r-1)/4}.
  \end{equation*}
  The factor $a_{\infty}(H, Y, k)$ is given as follows.
  \begin{align*}
    & \left( \text{det} Y \right)^{-k/2}\times \\
    &\int_{S_{r}(\mathbb{R})}
      \phi_{\infty}\left(
      w_{m, r}
      \nu_{m}\text{diag}(x, 0_{m-r})
      \mu_{m}(U^{-1} Y^{1/2})
      \right)
      \textbf{e}_{\infty}(-\text{tr} H' x)d\mu_{\infty}(x).
  \end{align*}
  The factor $ a_{q}(H, k)$ is given as follows.
  \begin{equation*}
    \int_{S_{r}(\mathbb{Q}_{q})}
    \phi_{q}\left(w_{m, r}
      \nu_{m}(\text{diag}(x, 0_{m-r}))
      \mu_{m}(U)^{-1}
    \right)
    \textbf{e}_{q}\left(-\text{tr} H'x\right)d\mu_{q}(x).
  \end{equation*}
  Here $w_{m, r}$ is given by
  \begin{equation*}
    \begin{pmatrix}
      0_{r} & & -1_{r} & \\
      & 1_{m-r} & & 0_{m-r}\\
      1_{r} & & 0_{r} & \\
      & 0_{m-r} & & 1_{m-r}
    \end{pmatrix}.
  \end{equation*}
  For a place $v$ of $\mathbb{Q}$, we have
  $\mathrm{tr}(xy) \in \mathbb{Q}_{v}$ for $x, y \in S_{r}(\mathbb{Q}_{v})$.
  We note that we can consider $\textbf{e}_{v}(-\text{tr} H' x)$ for $x \in S_{r}(\mathbb{Q}_{v})$.
  Let $v=q$ be a finite place.
  By replacing $x$ by $x[\alpha_{v}^{*}]$ and noting that there exists $\gamma \in P_{m}(\mathbb{Q}_{v})$
  such that $\text{det}\, a_{\gamma} = (\text{det}\, \overline{\alpha}_{v}\alpha_{v})^{-1}$ and
  \begin{equation*}
    w_{m, r}\nu_{m}\left(\text{diag}\left(x[\alpha_{v}^{*}], 0\right)\right)\mu_{m}(U^{-1})
    = \gamma w_{m, r}\nu_{m}\left(\text{diag}(x, 0)\right)\mu_{m}(U_{v}^{-1}),
  \end{equation*}
  we have
  \begin{align*}
    a_{q}(H, k) &=
                  \left|\text{det}(\alpha_{v}\alpha_{v}^{*})\right|_{v}^{r-k}
                  \int_{S_{r}(\mathbb{Q}_{v})}\phi_{q}\left(w_{m, r}\nu_{m}\left(\text{diag}(x, 0)\right)\right)
                  \textbf{e}_{q}(-\text{tr}H_{q}'x)d\mu_{q}(x)\\
                &=\left|\det(\alpha_{v}\alpha_{v}^{*})\right|_{v}^{r-k}
                  \int_{S_{r}(\mathbb{Q}_{v})}\phi_{q}\left(w_{r}\nu_{r}(x)\right)
                  \textbf{e}_{q}(-\text{tr}H_{q}'x)d\mu_{q}(x),
  \end{align*}
  where $w_{r} = w_{r, r}\in G_{r}$.
  As is well known, this can be written as follows (cf. \cite{Sh}, \cite{Ikeda}):
  \begin{equation*}
    a_{q}(H, k) =
    \left|\det(\alpha_{v}\alpha_{v}^{*})\right|_{v}^{r-k}
    L_{r, q}(k) \mathcal{F}_q(H_{q}', q^{-k}),
  \end{equation*}
  where
  \begin{equation*}
    L_{r, q}(k) = \prod_{i=0}^{r-1}(1 - \chi_{\boldsymbol{K}}^{i}(q)q^{i-k} ).
  \end{equation*}
  Here we understand $\chi_{\boldsymbol{K}}^{i}(q) = 1$ if $i$ is even.
  By a similar computation at the infinite place, we have the following.
  \begin{equation*}
    a_{\infty}(H, Y, k) =
    \left|\text{det}(\alpha_{v} \alpha_{v}^{*})\right|_{v}^{r-k}
    \int_{S_{r}(\mathbb{R})}\left|\det (x + \eta i)\right|_{\infty}^{-k}
    \textbf{e}_{\infty}(-\text{tr}H_{\infty}'x)
    d\mu_{\infty}(x).
  \end{equation*}
  Here $\eta$ is the $r \times r$ upper left block of $Y[U_{v}]$.
  By using the notation and the result of Shimura \cite{Sh}, (7.12),
  we have
  \begin{align*}
    \xi(\eta, H_{v}', k, 0) &=
                              \int_{S_{r}(\mathbb{R})}\left|\text{det} (x + \eta i)\right|_{\infty}^{-k}
                              \textbf{e}_{\infty}(-\text{tr}H_{\infty}'x)
                              d\mu_{\infty}(x),\\
                            &=
                              2^{(1-r)r}i^{-rk}(2\pi)^{rk}
                              \Gamma_{r}(k)^{-1}\left(\text{det} H_{v}'\right)^{k-r}
                              \textbf{e}(i \text{tr} H_{v}'\eta).
  \end{align*}
  Here
  \begin{equation*}
    \Gamma_{m}(s) = \pi^{m(m-1)/2}
    \prod_{i=0}^{m-1}\Gamma(s-i).
  \end{equation*}

  In the rest of the proof, we assume that $r$ is even for simplicity.
  We omit the proof for an odd $r$ since the proof is the same.
  We set
  \begin{equation*}
    L_{r}(k) = \prod_{q: \text{ prime}}L_{r, q}(k)
    = \prod_{i=0}^{r-1}L(k - i, \chi_{\boldsymbol{K}}^{i})^{-1},
  \end{equation*}
  and set $r = 2r'$ with $r' \in \mathbb{Z}_{\geq 1}$.
  Then by functional equations of Dirichlet $L$-functions, we have
  \begin{align*}
    L_{r}(k) = (-1)^{r'}2^{r} &
                                \left(2\pi\right)^{r(r-1)/2 - kr}
    \\ & \times
         D_{\boldsymbol{K}}^{(k+1/2)r' - r'(r'+1)}
         \prod_{i=0}^{r-1}\Gamma(k-i)\prod_{i=1}^{r}L(i-k, \chi_{\boldsymbol{K}}^{i-1})^{-1}.
  \end{align*}
  Thus we have
  \begin{align*}
    c_{\mu}L_{r}(k)\xi(\eta, H_{v}', k, 0)
    =
    (-1)^{r'}2^{r} & D_{\boldsymbol{K}}^{r'(k - r)}\left( \text{det} H'_{\infty} \right)^{k-r}
    \\ & \times
         \prod_{i=1}^{r}L(i-k, \chi_{\boldsymbol{K}}^{i-1})^{-1}
         \textbf{e}(i\text{tr} HY).
  \end{align*}
  Let $H \in \Lambda_{r}(\mathcal{O}_{\boldsymbol{K},q})$ with $\det H \ne 0$.
  By Theorem \ref{Ikeda}, we have
  \begin{equation*}
    \mathcal{F}_q(H, q^{-k}) = \left|\gamma(H)\right|_{q}^{k-r}\underline{\chi}_{\boldsymbol{K}, q}(\gamma(H))
    \mathcal{F}_{q}(H, q^{k-2r}).
  \end{equation*}
  Therefore, we have
  \begin{align*}
    & c_{\mu}L_{r}(k)\xi(\eta, H_{v}', k, 0)
      \prod_{q: \text{prime}} \mathcal{F}_{q}(H_{q}', q^{-k})
      = \\
    & 2^{r}\prod_{v: \text{place of } \mathbb{Q}}\left|\gamma(H_{v}')\right|_{v}^{r-k}
      \prod_{q: \text{prime}} \mathcal{F}_{q}(H_{q}', q^{k-2r})
      \prod_{i=1}^{r}L(i-k, \chi_{\boldsymbol{K}}^{i-1})^{-1}
      \textbf{e}(i\text{tr} HY).
  \end{align*}
  Since $H_{v}'[\alpha_{v}^{-1}] = H' \in S_{r}(\mathbb{Q})$, we have the assertion of Theorem \ref{thm:explicitH}.
\\
\\

\label{sec:proof-theor-refthm:1}

Acknowledgement: We thank S.~B\"{o}cherer
for helpful discussions related to this work.
This work was supported by JSPS KAKENHI: first author, Grant-in-Aid (C)
(No. 25400031); second author, Grant-in-Aid (B) (No. 16H03919).

\noindent
S.~Nagaoka\\
Dept. Mathematics, Kindai Univ., Higashi-Osaka\\
Osaka 577-8502,Japan\\
nagaoka@math.kindai.ac.jp
\\
\\
S.~Takemori\\
Max-Planck-Institut f\"{u}r Mathematik\\
Vivatsgasse 7, 53111 Bonn, Germany\\
stakemori@gmail.com


\begin{thebibliography}{9}


\bibitem{B}
S.~B\"{o}cherer:
\"{U}ber gewisse Siegelsche Modulformen zweiten Grades,
Math. Ann. \textbf{261}(1982), 23-41.

\bibitem{Bo-Ki-Ta}
S.~B\"ocherer, T.~Kikuta, T.~Takemori:
Weights of the mod $p$ kernel of the theta operators,
to appear in Canadian Journal of Mathematics, arXiv: 1606.06390.

\bibitem{Bo-Ko-Na}
S.~B\"ocherer, H.~Kodama, S.~Nagaoka: On the kernel of the theta operator mod $p$,
to appear in to appear in manuscripta mathematica, arXiv: 1707.03680.

\bibitem{B-N}
S.~B\"{o}cherer and S.~Nagaoka:
On mod $p$ properties of Siegel modular forms,
Math. Ann. \textbf{338}(2007), 421-433.

\bibitem{Ca}
L.~Carlitz:
Arithmetic properties of generalized Bernoulli
numbers,
J. Reine Angew. Math. \textbf{202}(1959), 174-182.

\bibitem{C-C-R}
D.~Choi, Y.~Choie, O.~Richter: Congruences for Siegel modular forms,
Annales de l'Institut Fourier, 61 no.4, 1455-1466, (2011)

\bibitem{D-K}
T.~Dern and A.~Krieg:
Graded ring of Hermitian modular forms of degree 2,
Manuscripta Math. \textbf{110}(2003), 251-272.

\bibitem{De-Ri}
M.~Dewar O.~Richter: Ramanujan congruences for Siegel modular forms. Int. J. Number Theory  6  (2010),  no. 7, 1677-1687.

\bibitem{Ikeda-Siegel}
T.~Ikeda:
On the lifting of elliptic cusp forms to Siegel cusp forms of degree $2n$,
Annals of Math.,
\textbf{154}(2001), 641-681.

\bibitem{Ikeda}
T.~Ikeda:
On the lifting of Hermitian modular forms,
Compositio Math.,
\textbf{144}(2008), 1107-1154.

\bibitem{Ka}
H.~Katsurada:
An explicit formula for Siegel series,
Amer. J. Math.,
\textbf{121}(1999), 415-452.

\bibitem{Kat}
N.~M. Katz: A result on modular forms in characteristic $p$.
Modular functions of one variable, V (Proc. Second Internat. Conf., Univ. Bonn, Bonn, 1976),  pp. 53-61. Lecture Notes in Math., Vol. 601, Springer, Berlin, 1977.

\bibitem{K-K-N}
T.~Kikuta, H.~Kodama and S.~Nagaoka:
Note on Igusa's cusp form of weight 35,
Rocky Mountain J. Math.,
\textbf{45}(2015). 963-972.

\bibitem{K-N}
T.~Kikuta and S.~Nagaoka:
On the theta operator for Hermitian modular forms
of degree 2,
Abh. Math. Sem. Univ. Hamburg,
\textbf{87}(2017), 145-163.

\bibitem{M-N}
T.~Munemoto and S.~Nagaoka:
Note on $p$-adic Hermitian Eisenstein series,
Abh. Math. Sem. Univ. Hamburg
\textbf{76}(2006), 247-260.

\bibitem{Na}
S.~Nagaoka:
On the mod $p$ kernel of the theta operator,
Proc. Amer. Math. Soc.
\textbf{143}(2005), 4273-4244.

\bibitem{N-T}
S.~Nagaoka and S.~Takemori:
Notes on theta series for Niemeier lattices,
Ramanujan J. Math.,
\textbf{42}(2017), 385-400.

\bibitem{K-W}
M.~Ozeki and T.~Washio:
Table of the Fourier coefficients of Eisenstein series of
degree 3,
Proc. Japan Acad., Ser. A,
\textbf{59}(1983), 252-255.

\bibitem{Se}
J.-P.~Serre: Formes modulaires et fonctions z$\hat{\text{e}}$ta $p$-adiques,
Modular functions of one variable III, Lec. Notes in Math. 350,
Springer Verlag, 1973, 191-268.

\bibitem{Sh}
G.~Shimura:
On Eisenstein series,
Duke Math. J. \textbf{50}(1983), 417-476.

\bibitem{Sh-book}
G.~Shimura:
Euler products and Eisenstein series,
CBMS Regional Conference Series in Mathematics, vol. \textbf{93},
Amer. Math. Soc., Providence (1997).

\bibitem{Take}
S.~Takemori:
Siegel Eisenstein series of degree $n$ and $\Lambda$-adic
Eisenstein series,
Journal of Number Theory \textbf{149}(2015), 105-138.

\bibitem{Weis}
R.~Weissauer:
Stabile Modulformen und Eisensteinreihen.
Lec. Notes in Math. \textbf{1219}, Springer,
New York, 1986.

\end{thebibliography}
\end{document}